\newcommand{\R}{{\mathbb R}}
\newcommand{\C}{{\mathbb C}}
\newcommand{\Z}{{\mathbb Z}}
\renewcommand{\d}{{\partial}}
\renewcommand{\H}{{\mathcal H}}
\newcommand{\T}{{\mathcal T}}
\theoremstyle{plain}
\newtheorem{proposition}{Proposition}
\newtheorem{corollary}{Corollary}
\theoremstyle{remark}
\newtheorem{remark}{Remark}
\newtheorem{example}{Example}
\newtheorem{question}{Question}
\theoremstyle{definition}
\newtheorem{definition}{Definition}
\title{Local scales on curves and surfaces}
\author{Triet Le \thanks{Department of Mathematics, University of Pennsylvania, David Rittenhouse Lab.
209 South 33rd Street, Philadelphia, PA 19104, trietle@math.upenn.edu}}
\begin{document}
\maketitle
\thispagestyle{plain}

\abstract{In this paper, we extend the study of local scales of a function \cite{jones2009local} to studying local scales on curves and surfaces. In the case of a function $f$, the local scales of $f$ at $x$ is computed by measuring the deviation of $f$ from a linear function near $x$ at different scales $t$'s. In the case of a $d$-dimensional surface $\Gamma$, the analogy is to measure the deviation of $\Gamma$ from a $d$-plane near $x$ on $\Gamma$ at various scale $t$'s. We then apply the theory of singular integral operators on $\Gamma$ to show useful properties of local scales. We will also show that the defined local scales are consistent in the sense that the number of local scales are invariant under dilation.}

\section{Introduction}
Given a bounded function $f$ (an image) defined on $\R^n$, an
important task in image analysis is the extraction of local features
and information, which we call local scales. The knowledge of local
scales is then used for tasks like image matching, texture
segmentation, object recognitions, and image and domain decompositions. A common approach in studying local scales (\cite{lindeberg1993detecting}, \cite{lindeberg1998fda}, \cite{strong2003edge}, \cite{lowe2004distinctive}, \cite{brox2004tfb}) in
images is to have a multiscale representation $\{u(t)\}_{t\ge 0}$
(linear or nonlinear) of $f$. The local scales of $f$ at $x$ is
computed using the information coming from $\{u(x,t)\}$, for $t\ge 0$. In \cite{luo2006characteristic}, $\{u(x,t)\}$ is replaced by a the set of shapes $\{S(x,t)\}$ that contains $x$. These methods compute a single meaningful scale at each location $x$. We refer the readers to \cite{le2010local} for an over view and analysis of local scales in images. See also \cite{tadmor2004mir}, \cite{aujol2006structure}, \cite{vixie2010multiscale}, \cite{le2010inter} for different approaches in obtaining the global scales from the point of view of image decompositions.

Realizing that $x$ may be embedded in multiple scales, Jones and the author \cite{jones2009local} propose a new method for extracting a vector of scales at each location $x$. This notion of local scales is further characterized based on the visibility level of the scales and their separation from other scales. This multiscale analysis is intimately related to the theory of wavelets \cite{daubechies1992ten} and square functions applied to $f$ (see Section \ref{ls_images} below). Here, we would like to replace the function $f$ with a subset $\Gamma\subset \R^n$, which we can think of as curves or surfaces. We note that in shape analysis, the knowledge of local scales on curves and surfaces is useful for shape matching and comparison. Just as in images \cite{lowe2004distinctive}, the local scales on $\Gamma$ at $x$ can be used to build distinctive features on $\Gamma$. Multiscale and wavelets analysis on curves and surfaces has been a wide subject of study (\cite{jones1990rectifiable}, \cite{david1991wavelets}, \cite{david1993analysis}, among others). Methods of denoising and reconstructing parametric curves are proposed by L-M  Reissell \cite{reissell1996wavelet} using wavelets, and recently by M. Feiszli and P. Jones \cite{feiszli2010sin} to denoise piecewise smooth curves while preserving sigularity. See also P.L. Rosin \cite{rosin1998determining} for a study of local scales on curves.

In this paper, we would like to extend the theory from \cite{jones2009local} to curves and surfaces. Our motivation comes form the work of P. Jones \cite{jones1990rectifiable}, and G. David and S. Semmes \cite{jones1990rectifiable}, where the multiscale analysis and analysis of square functions is applied to the set $\Gamma$ (instead of a function $f$). The paper is organized as follows. In Section \ref{ls_images}, we recall the study of local scales of a function from \cite{jones2009local}. In Section \ref{ls_surface}, we show that under the assumption that $\Gamma$ is a Lipschitz graph or $\Gamma$ contains big pieces of Lipschitz graph \cite{david1993analysis}, the analogous results \cite{jones2009local} can be extended to curves and surfaces. In the last section, we discuss possible techniques for diffusing curves and surfaces.

\section{Local scales in images}\label{ls_images}
We recall the study of local scales from \cite{jones2009local}. Let $\phi(x)$ be an approximation to the identity. In particular, let
$$
\phi(x) = e^{-\pi |x|^2}.
$$
Here we have $\int_{\R^n} \phi(x)\ dx = 1$. Another desirable choice of $\phi$ is the Poisson kernel, but here we will consider the $\phi$ given above. For each $t>0$, define
$$
K_t(x) = t^{-n/2}\phi(x/\sqrt{t}) = (t)^{-n/2} e^{-\pi\frac{|x|^2}{t}}.
$$
Note that $K_t(x)$ is the the Gaussian kernel in $\R^n$ satisfying
\begin{equation}
\d_t K_t(x) = (4\pi)^{-1}\Delta K_t(x).
\end{equation}
Moreover, the Fourier transform of $K_t(x)$ is given by \cite{stein1970}
$$
\widehat{K_t}(\xi) = \int_{\R^n} K_t(x) e^{2\pi i x\xi}\ dx = \int_{\R^n} (4\pi t)^{-n/2} e^{-\frac{|x|^2}{4t}}e^{2\pi i x\xi}\ dx=e^{-\pi t|\xi|^2},
$$
and for all $k\ge 0$,
\begin{equation}\label{sm_Kt}
\left\|\frac{\d^k}{\d t^k}K_t \right\|_{L^1(\R^n)}\le \frac{C_k}{t^k}.
\end{equation}

For each $t>0$, let $\psi_t(x) = t\frac{\d K_t}{\d t}$, $x\in\R^n$. From \eqref{sm_Kt}, we see that for all $k\ge 0$,
\begin{equation}\label{sm_psi}
\left\|\frac{\d^k}{\d t^k}\psi_t \right\|_{L^1(\R^n)}\le \frac{C_{k+1}}{t^k}.
\end{equation}

Let $f\in L^\infty(\R^n)$ be a given image. For each $x\in \R^n$ and $t>0$, define
\begin{equation}\label{ut}
Sf(x,t) =\psi_t*f(x) = \int_{\R^n} \psi_t(x-y)f(y)\ dy.
\end{equation}
Since $\psi_t$ has zero mean and zero first moments, we see that if $f$ is linear, then $Sf(x,t) = 0$ for all $t>0$ and $x\in \R^n$.  Thus the quantity $|Sf(x,t)|$ measures how $f$ deviates from a linear function of scale $t$ near $x$. Note that $Sf$ is invariant under addition by a linear function. 

\begin{remark}(Interpretation of $\psi_t$) Let $u(t) = K_t*f$ be the multiscale representation of $f$ with respect to the diffusion $K_t$. Then 
$$
Sf(x,t) = \psi_t*f = t\frac{\d}{\d t}u(t) = \ln(a)\frac{\d}{\d\tau}u(a^\tau),
$$ 
where $\tau = \log_a(t)$ for some $a>1$. $\frac{\d}{\d\tau}u(a^\tau)$ gives the change of $u$ in logarithmic scale. A big value of  $|\frac{\d}{\d\tau}u|$ (local max of $\frac{\d}{\d\tau}u(a^\tau)$) implies that there is a change in scale (logarithmic scale). One can also view $\psi_t$ as a wavelet, and $Sf(x,t)$ is the wavelet coefficient of $f$ at scale $t$ and location $x$. For each $x$, the family $\{u(x,t_i(x)): t_i\in T_f(x)\}$ gives a refined multiscale representation of $f$ at $x$.
\end{remark}

By \eqref{sm_psi}, we have for all $k\ge 0$,
\begin{equation}
\sup_{x\in \R^n} \left|\frac{\d^k}{\d t^k}Sf(x,t) \right| \le \frac{C_k}{t^k}\|f\|_{L^\infty}.
\end{equation}
By a change of variable, let $\tau = \log_a(t)$ for some fixed $a>1$, and let 
$$
Sf(x,\tau) := Sf(x,t),\mbox{ such that } \tau = \log_a(t).
$$
Then with respect to $\tau$, we have
\begin{equation}\label{sm_sf}
\sup_{x\in \R^n} \left|\frac{\d^k}{\d \tau^k}Sf(x,\tau) \right| \le C_k\|f\|_{L^\infty}.
\end{equation}

\begin{example}\label{ex_sin}
Let $f(x) = \sin(2\pi mx)$, $x\in \R$. A simple calculation shows that
$$
Sf(x,t) = t\frac{\d}{\d t}\left(2\sin(2\pi mx)e^{-\pi t|m|^2}\right) = -2\pi |m|^2 \sin(2\pi mx) te^{-\pi t|m|^2}.
$$
For each $x\in \R$ such that $\sin (2\pi mx)\neq 0$, $|Sf(x,t)|$ has only one local maxima at $t = \frac{1}{\pi m^2}$. 
\end{example}

\begin{definition} Let $f\in L^\infty(\R^n)$. For each $x\in \R^n$, the local scales of $f$ at $x$ is defined as the set
\begin{equation}\label{ls_f}
T_f(x) = \{t>0: |Sf(x,t)|\mbox{ is  a local maximum}\}.
\end{equation}
By a change of variable, let $\tau = \log_a(t)$ and $Sf(x,\tau) := Sf(x,a^\tau) = Sf(x,t)$. Denote
$$
\T_f(x) = \{\tau\in \R: t=a^\tau\in T_f(x)\}.
$$
\begin{itemize}
\item For each $\beta>0$, we say $\tau\in \T_f(x)$ is $\beta$-visible if $|Sf(x,\tau)| >\beta$.
\item For each $\delta>0$, we say $\tau\in \T_f(x)$ is $\delta$-separated if $|\frac{\d^2}{\d \tau^2}Sf(x,\tau)| >\delta$.
\end{itemize}
\end{definition}

For each $\delta >0$, denote by $d_\delta f(x) = f(\delta x)$ the dilating operator. The following property relates local scales of $f$ with its dilation. 

\begin{definition}
({\bf Dilating Consistency Property}) We say the set of local scales $T_f(x)$ satisfies the dilating consistency property if
$$
T_{d_\delta f}(\delta x) = \{\delta^s t: t\in T_f(x)\}, \mbox{ for some } s\in \R.
$$
\end{definition}
\begin{remark}
The Dilating Consistency property of local scales also implies that the number (cardinality) of local scales are invariant under dilation. In other words, we do not introduce or remove local scales as a result of dilating (zoom-in or zoom-out) $f$. The local scales of $f$ defined in \cite{brox2004tfb} and \cite{luo2006characteristic}, for instance, do not satisfy this condition. See \cite{le2010local} for an overview of the study of local scales in images.
\end{remark}

\begin{proposition}
The set of local scales defined by \eqref{ls_f} satisfies the Dilating Consistency property with $s=-2$.
\end{proposition}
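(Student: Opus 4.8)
The plan is to reduce the statement to a single self-similarity identity for the operator $S$, and then observe that this identity merely reparametrizes the scale variable by a fixed power of $\delta$, under which the locations of local maxima are preserved. The structural fact doing all the work is the exact parabolic scaling of the kernel: since $K_t(x)=t^{-n/2}\phi(x/\sqrt t)$ and $\psi_t=t\,\d_t K_t$, one checks directly that $\psi_t(x)=t^{-n/2}\psi_1(x/\sqrt t)$. This says that the scale parameter $t$ lives at spatial length $\sqrt t$; the appearance of $\sqrt t$ (rather than $t$) is exactly what will force the exponent $s=-2$ rather than $-1$.

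First I would compute $S(d_\delta f)$ directly from \eqref{ut}. Writing $S(d_\delta f)(\xi,t)=\int\psi_t(\xi-y)f(\delta y)\,dy$ and substituting $w=\delta y$, the dilation can be pulled through the convolution by rewriting $\psi_t(\xi-w/\delta)=\psi_t\bigl(\tfrac1\delta(\delta\xi-w)\bigr)$ and applying the scaling identity for $\psi_t$. After collecting the powers of $\delta$ — the Jacobian $\delta^{-n}$ cancels exactly against the $\delta^{n}$ produced when $\psi_t$ is recast at the dilated length scale — this yields the clean identity
\begin{equation*}
S(d_\delta f)(\xi,t)=Sf(\delta\xi,\ \delta^2 t).
\end{equation*}
Evaluating at the appropriate base point then relates $S(d_\delta f)$ at that point to $Sf$ at $x$ with the scale variable rescaled by $\delta^2$. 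The essential feature is that a spatial dilation by $\delta$ multiplies the scale by $\delta^2$; this is the analytic origin of $s=-2$.

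With the pointwise identity in hand, I would pass to the statement about the sets $T_f$ and $\T_f$. Since $t\mapsto\delta^2 t$ is a strictly increasing bijection of $(0,\infty)$ onto itself, and since the positions of the local maxima of $|S(d_\delta f)(\,\cdot\,,t)|$ in $t$ depend only on $t$, these maxima correspond bijectively, via $t'=\delta^2 t$, to the local maxima in $t'$ of $|Sf|$. Hence each local scale of $f$ at $x$ produces exactly one local scale of $d_\delta f$, rescaled by $\delta^{-2}$, and none is created or destroyed — which is precisely the Dilating Consistency property with $s=-2$. The cleanest way to record the ``no maxima created or destroyed'' step is in the logarithmic variable $\tau=\log_a t$: there the map $t\mapsto\delta^2 t$ becomes the translation $\tau\mapsto\tau+2\log_a\delta$, under which local maxima are manifestly preserved, so the conclusion transfers immediately to $\T_f$.

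I do not expect a genuine obstacle here; the smoothness bounds \eqref{sm_psi}--\eqref{sm_sf} play no role, and the entire content is the scaling identity. The only place demanding care is the bookkeeping of the $\delta$-powers in the change of variables and the verification that the argument of $\psi$ reorganizes exactly into the form $\psi_{\delta^2 t}$. One should also confirm that no stray multiplicative constant survives in front of $Sf$; even if one did, it would be harmless, since a nonzero constant factor leaves the locations of the local maxima of $|S(d_\delta f)(\,\cdot\,,t)|$ unchanged.
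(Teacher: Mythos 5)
The paper states this proposition without any proof, so there is nothing to compare your argument against; your write-up supplies the missing argument, and it is correct. The scaling identity $\psi_t(x)=t^{-n/2}\psi_1(x/\sqrt t)$ holds for this kernel, the bookkeeping in the change of variables works exactly as you say (the Jacobian $\delta^{-n}$ cancels the $\delta^{n}$ produced by recasting $\psi_t$ at the dilated length scale, with no stray constant), the identity $S(d_\delta f)(\xi,t)=Sf(\delta\xi,\delta^2 t)$ is exactly right, and passing to the sets of local maxima via the increasing bijection $t\mapsto\delta^2 t$ (or the translation in the $\tau$ variable) is all that remains.

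The one thing you should not leave implicit is the phrase ``evaluating at the appropriate base point,'' because it conceals a genuine mismatch in the paper itself. With the paper's convention $d_\delta f(x)=f(\delta x)$, your identity relates $S(d_\delta f)$ at $\xi$ to $Sf$ at $\delta\xi$, so to land on $Sf(x,\cdot)$ you must take $\xi=x/\delta$, which yields
$$
T_{d_\delta f}(x/\delta)=\{\delta^{-2}t:\ t\in T_f(x)\}.
$$
If instead you evaluate literally at $\xi=\delta x$, as the displayed Dilating Consistency definition demands, you get $S(d_\delta f)(\delta x,t)=Sf(\delta^2 x,\delta^2 t)$ and hence $T_{d_\delta f}(\delta x)=\{\delta^{-2}t:\ t\in T_f(\delta^2 x)\}$, which is not the asserted set unless $T_f(\delta^2 x)=T_f(x)$. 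So the definition as printed pairs the wrong points for the stated convention: under $d_\delta f(x)=f(\delta x)$, the point of $d_\delta f$ corresponding to $x$ is $x/\delta$, not $\delta x$ (the pairing with $\delta x$ would be natural for $d_\delta f(x)=f(x/\delta)$, but then the exponent comes out $s=+2$). Your computation proves the intended statement and correctly identifies $s=-2$; just state the evaluation point $\xi=x/\delta$ explicitly so the reader sees where the correspondence of points enters, rather than inheriting the paper's ambiguity.
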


We have the following characterization of $\beta$-visible and $\delta$-separated local scales. Let $\Omega\subset \R^n$ be a bounded domain. For each $x\in\Omega$ and $\delta>0$, let
$$
\tau_\delta(x) = \left\{\tau\in \T_f(x): \left|\frac{\d^2}{\d \tau^2}Sf(x,\tau)\right| >\delta\right\},
$$
and
\begin{equation}\label{omega_delta_N}
\Omega_{\delta,N} = \{x\in \Omega: \#\tau_\delta(x) \ge N\}.
\end{equation}
The following Corollary provides a characterization of $|\Omega_{\delta,N}|$ and it is an application to the John-Nirenberg Theorem \cite{john1961functions}. It tells us that the set of points which are embedded in many scales is small, and decays exponentially as a function of the number of scales.
\begin{corollary}\label{cor_delta_ls}
Suppose $f\in L^\infty(\R^n)$ and $\Omega\subset \R^n$ be bounded. Let $\Omega_{\delta,N}$ be defined as in \eqref{omega_delta_N}. Then there exist constants $C_1$ and $C_2$ which depend on $\|f\|_{L^\infty}$ and $|\Omega|$ such that
$$
|\Omega_{\delta,N}| \le C_1e^{-C_2N\delta^3}.
$$
\end{corollary}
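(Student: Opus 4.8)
The plan is to pass from counting local scales to an $L^2$ energy of $\partial_\tau^2 Sf$ in the scale variable, to recognize that energy as a Littlewood--Paley square function of $f$, and then to extract the exponential decay from the Carleson-measure/John--Nirenberg theory.

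First I would prove a pointwise (in $x$) counting estimate. Fix $x$ and set $v(\tau)=\partial_\tau Sf(x,\tau)$. At each $\tau_i\in\tau_\delta(x)$ the quantity $|Sf|$ has a local maximum, so $v(\tau_i)=0$, while the $\delta$-separation gives $|v'(\tau_i)|=|\partial_\tau^2 Sf(x,\tau_i)|>\delta$. Using the uniform bound $|\partial_\tau^3 Sf|\le C_3\|f\|_{L^\infty}=:M$ from \eqref{sm_sf}, a one-variable Taylor argument shows $|\partial_\tau^2 Sf|>\delta/2$ on $I_i=[\tau_i-\delta/(2M),\tau_i+\delta/(2M)]$, and that two consecutive points of $\tau_\delta(x)$ lie more than $2\delta/M$ apart (between them $v'=\partial_\tau^2 Sf$ must swing through a range of size $\gtrsim\delta$, which costs length $\gtrsim\delta/M$). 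Hence the $I_i$ are disjoint and each carries energy $\int_{I_i}|\partial_\tau^2 Sf|^2\,d\tau\ge(\delta/2)^2(\delta/M)$, so that
\[
\#\tau_\delta(x)\ \le\ \frac{4M}{\delta^3}\,F(x),\qquad F(x):=\int_{\R}\big|\partial_\tau^2 Sf(x,\tau)\big|^2\,d\tau .
\]
Consequently $\Omega_{\delta,N}\subseteq\{x\in\Omega:\ F(x)\ge N\delta^3/(4M)\}$, and the problem reduces to an exponential distributional bound for $F$.

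Next I would identify $F$ as a square function. Changing variables $t=a^\tau$ turns $\partial_\tau$ into $(\ln a)\,t\partial_t$, and since $Sf=t\partial_t(K_t*f)$ we obtain $\partial_\tau^2 Sf(x,\tau)=\Phi_t*f(x)$ with $\Phi_t=(\ln a)^2(t\partial_t)^3K_t$, whence $F(x)=\tfrac{1}{\ln a}\int_0^\infty|\Phi_t*f(x)|^2\frac{dt}{t}$. The kernel $\Phi_t$ is smooth, rapidly decaying, and mean-zero with vanishing first moments, so the standard Littlewood--Paley estimate shows that $d\mu=|\Phi_t*f(x)|^2\frac{dx\,dt}{t}$ is a Carleson measure, $\sup_Q|Q|^{-1}\mu\big(Q\times(0,\ell(Q))\big)\lesssim\|f\|_{L^\infty}^2$ (using $L^\infty\subset\mathrm{BMO}$). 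Because $\Omega$ is bounded, the remaining large-scale part of the integral is absorbed into a constant depending on $|\Omega|$, which is where that dependence enters. I would then convert the Carleson bound into exponential decay by the John--Nirenberg mechanism: if $\mu$ has Carleson norm $A$, then $|\{x\in\Omega:F(x)>\lambda\}|\le C_1|\Omega|\,e^{-C_2\lambda/A}$, which is exactly John--Nirenberg applied to the stopping-time/good-$\lambda$ decomposition of $\mu$ (giving exponential square-integrability of the associated square function). Taking $\lambda=N\delta^3/(4M)$ and $A\lesssim\|f\|_{L^\infty}^2$ yields $|\Omega_{\delta,N}|\le C_1e^{-C_2N\delta^3}$ with $C_1,C_2$ depending only on $\|f\|_{L^\infty}$ and $|\Omega|$.

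I expect the main obstacle to be this last step, together with the convergence of the scale integral. The elementary counting estimate is robust and the Carleson bound for $\Phi_t$ is classical; the delicate points are (i) genuinely extracting the \emph{exponential} gain via John--Nirenberg rather than the merely polynomial bound that Chebyshev applied to the Carleson estimate would give, and (ii) truncating the $t$-integral so that $F$ is finite, since for a general $f\in L^\infty$ the full square function may diverge at large scales; one must restrict to scales $t\lesssim(\mathrm{diam}\,\Omega)^2$ and check that this truncation does not discard any of the $\delta$-separated local maxima being counted.
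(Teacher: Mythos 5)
Your first two steps are, up to minor rewording, exactly the paper's: the disjoint intervals $I_i$ of length $\sim\delta/M$ on which $|\partial_\tau^2 Sf(x,\cdot)|\ge\delta/2$, the resulting pointwise bound $\#\tau_\delta(x)\le 4M\delta^{-3}F(x)$, and the identification $\partial_\tau^2 Sf(x,\tau)=\Phi_t*f(x)$ with $\Phi_t=(\ln a)^2(t\partial_t)^3K_t$ all appear verbatim in the paper's proof of the surface analogue, Corollary~\ref{cor_gamma_delta_n} (which the paper states is the same argument as the proof of this corollary in \cite{jones2009local}). The genuine gap is in your final step. The lemma you invoke --- that a Carleson-norm bound $A$ on $d\nu=|\Phi_t*f(x)|^2\,\frac{dx\,dt}{t}$ forces $|\{x\in\Omega:F(x)>\lambda\}|\le C_1|\Omega|e^{-C_2\lambda/A}$ for the \emph{vertical} square function $F(x)=\int_0^\infty|\Phi_t*f(x)|^2\,\frac{dt}{t}$ --- is false in that generality, because the Carleson condition only controls box integrals $\int_Q\int_0^{\ell(Q)^2}$ and is blind to the scales $t>\ell(Q)^2$ sitting above any given cube. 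Concretely, the measure
\[
d\nu=\frac{\lambda}{T}\,\chi_\Omega(x)\,\chi_{(T,2T)}(t)\,dx\,dt
\]
has Carleson norm at most $|\Omega|\lambda T^{-n/2}$, which is arbitrarily small as $T\to\infty$, while its vertical integral equals $\lambda$ on all of $\Omega$; so no estimate of the claimed form can hold with constants depending only on the Carleson norm. (The true classical statement in this direction --- the sweep of a Carleson measure lies in $BMO$ --- concerns the cone-averaged functional, not the vertical one, and the Chang--Wilson--Wolff exponential-square phenomenon runs in the other direction.) What rescues the argument for this particular $\nu$ is the slow variation of $y\mapsto\Phi_t*f(y)$ at spatial scale $\sqrt{t}$, and exploiting that is precisely the paper's route, which your proof needs to substitute here: one shows that $F$ itself is a $BMO$ \emph{function} of $x$ with norm controlled by $\|f\|_{L^\infty}^2$ --- in the paper this is extracted from the $L^2$-boundedness of the Littlewood--Paley $g_k$-functions (Corollary~\ref{cor_gk} in the surface case, the Stein $g_k$ bound here), tested on characteristic functions of balls --- and then applies the classical John--Nirenberg inequality to $F$, subtracting its mean $\mathcal{S}_\Omega$ over $\Omega$ and absorbing the factor $e^{C_2'\mathcal{S}_\Omega/\|F\|_{BMO}}$ into $C_1$.

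A second, smaller point: your closing concern about truncating the $t$-integral is well founded, but your proposed fix points the wrong way. For a general $f\in L^\infty(\R^n)$ one cannot restrict to $t\lesssim(\mathrm{diam}\,\Omega)^2$ and ``check that no counted maxima are discarded'': for $f(x)=\sin(\log|x|)$ every $x$ has $\delta$-separated local maxima of $|Sf(x,\cdot)|$ at arbitrarily large scales (the scale profile is asymptotically a nondegenerate sinusoid in $\tau$), so the large scales genuinely contribute and $F$, as well as its mean, diverges. The estimate therefore implicitly requires a localization hypothesis on $f$ (e.g.\ $f\in L^1\cap L^\infty$, as for an image extended by zero), mirroring the boundedness of $\Gamma$ assumed in Corollary~\ref{cor_gamma_delta_n}, where finiteness of $\mu$ gives $|\Phi_t*\mu|\lesssim t^{-d/2}$ and makes the large-scale part of the square function $O(1)$. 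Under such a hypothesis both the paper's mean subtraction and your truncation are harmless; without it, neither argument (nor, as stated, the inequality itself) survives the example above.
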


The proof of this corollary \cite{jones2009local} essentially uses the fact that for any integer $k\ge 1$, the Littlewood-Paley $g_k$-function (a vector-valued of singular integral) defined as
$$
g_k(f)(x) = \left[\int_0^\infty\left|t^k\frac{\d^k K_t}{\d t^k}*f(x) \right|^2\frac{dt}{t} \right]^{1/2}
$$
is bounded in $L^2(\R^n)$ (see Chapter IV, Section 1 in \cite{stein1970}). I.e.
$$
\|g_k(f)\|_{L^2} \le A_2\|f\|_{L^2}.
$$

For $x\in \Omega$ and $\beta,\delta >0$, define
$$
\tau_{\beta,\delta}(x) = \left\{\tau\in \T_f(x): |Sf(x,\tau)|>\beta, \left|\frac{\d^2}{\d \tau^2}Sf(x,\tau)\right| >\delta\right\},
$$
and 
\begin{equation}\label{omega_beta_delta_N}
\Omega_{\beta,\delta,N} = \{x\in \Omega: \#\tau_{\beta,\delta}(x) \ge N\}.
\end{equation}
A similar result as in Corollary \ref{cor_delta_ls} also holds for $|\Omega_{\beta,\delta,N}|$.
\begin{corollary}\label{cor_beta_delta_ls}
Suppose $f\in L^\infty(\R^n)$ and $\Omega\subset \R^n$ be bounded. Let $\Omega_{\beta,\delta,N}$ be defined as in \eqref{omega_beta_delta_N}. Then there exist constants $C_1$ and $C_2$ which depend on $\|f\|_{L^\infty}$ and $|\Omega|$ such that
$$
|\Omega_{\beta,\delta,N}| \le C_1e^{-C_2N\delta^2\alpha},
$$
where $\alpha = \min(\beta,\delta)$.
\end{corollary}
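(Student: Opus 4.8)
The plan is to follow the proof of Corollary \ref{cor_delta_ls} almost verbatim, changing only the elementary per-scale estimate so that each counted local scale contributes a factor $\delta^2\alpha$ in place of $\delta^3$; the measure-theoretic machinery (the $L^2$-boundedness of the $g_k$ together with the John--Nirenberg theorem) is then applied unchanged. Throughout, fix $x$ and abbreviate $h(\tau)=Sf(x,\tau)$, so that \eqref{sm_sf} furnishes the uniform bounds $|h'(\tau)|\le C_1\|f\|_{L^\infty}$, $|h''(\tau)|\le C_2\|f\|_{L^\infty}$ and $|h'''(\tau)|\le C_3\|f\|_{L^\infty}$ for every $\tau$.

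First I would establish the per-scale lower bound, which is the only genuinely new ingredient. Suppose $x\in\Omega_{\beta,\delta,N}$ and let $\tau_1<\cdots<\tau_M$, $M\ge N$, enumerate $\tau_{\beta,\delta}(x)$; at each $\tau_i$ one has $h'(\tau_i)=0$, $|h(\tau_i)|>\beta$ and $|h''(\tau_i)|>\delta$. The $\beta$-visibility propagates along the interval $|\tau-\tau_i|\le\ell_\beta:=\beta/(2C_1\|f\|_{L^\infty})$, on which $|h|>\beta/2$, while the $\delta$-separation propagates along $|\tau-\tau_i|\le\ell_\delta:=\delta/(2C_3\|f\|_{L^\infty})$, on which $|h''|>\delta/2$. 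Putting $\ell=\min(\ell_\beta,\ell_\delta)$ and $I_i=[\tau_i-\ell,\tau_i+\ell]$, both properties survive on $I_i$, whose length is $\gtrsim\min(\beta,\delta)=\alpha$. This is precisely where the visibility enters: intersecting the visibility interval (length $\gtrsim\beta$) with the separation interval (length $\gtrsim\delta$) produces the factor $\alpha$ rather than the $\delta$ obtained in Corollary \ref{cor_delta_ls}. For disjointness, note that Rolle's theorem gives a zero of $h''$ between consecutive $\tau_i$, so $|h''(\tau_i)|,|h''(\tau_{i+1})|>\delta$ together with $|h'''|\le C_3\|f\|_{L^\infty}$ forces $\tau_{i+1}-\tau_i>2\delta/(C_3\|f\|_{L^\infty})=4\ell_\delta\ge 4\ell$; hence the $I_i$ are pairwise disjoint and
\[
\int_{-\infty}^{\infty}\left|\frac{\d^2}{\d\tau^2}Sf(x,\tau)\right|^2 d\tau\ \ge\ \sum_{i=1}^{M}\int_{I_i}|h''|^2\,d\tau\ \gtrsim\ N\,\delta^2\,\alpha,
\]
with the implied constant depending only on $\|f\|_{L^\infty}$. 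Writing $F(x)$ for the left-hand side, this gives $\Omega_{\beta,\delta,N}\subseteq\{x\in\Omega:F(x)\ge cN\delta^2\alpha\}$.

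Next I would invoke the machinery inherited from Corollary \ref{cor_delta_ls}. Using $\frac{\d}{\d\tau}=(\ln a)\,t\frac{\d}{\d t}$ and $Sf=t\frac{\d}{\d t}(K_t*f)$, the integrand $\frac{\d^2}{\d\tau^2}Sf$ is a fixed linear combination of $t^j\frac{\d^j K_t}{\d t^j}*f$ for $j=1,2,3$, so $F(x)\lesssim\sum_{j=1}^{3}g_j(f)(x)^2$. The $L^2$-boundedness of the $g_j$ then yields, after the standard localization of the scale integral to a Carleson box over a cube $Q_0\supseteq\Omega$ and use of the Carleson-measure characterization of BMO, that $F$ coincides on $\Omega$ with a BMO function of norm $\lesssim\|f\|_{L^\infty}^2$. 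The John--Nirenberg theorem \cite{john1961functions} then converts this into the exponential distributional estimate $|\{x\in\Omega:F(x)\ge cN\delta^2\alpha\}|\le C_1e^{-C_2N\delta^2\alpha}$, with $C_1,C_2$ depending on $\|f\|_{L^\infty}$ and $|\Omega|$, which is the assertion.

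I expect the last step to be the main obstacle. For $f\in L^\infty$ the $g_j$ need not be finite pointwise (their large-$t$ tails may diverge, as the step function already shows), so one must truncate the scale integral to the band relevant to $\Omega$ and verify that this truncation is lossless for the counted maxima while respecting the norm $\lesssim\|f\|_{L^\infty}^2$; this is exactly the point where the $L^2$-boundedness of $g_k$ is essential, whereas the per-scale geometry is entirely elementary. Finally, I would record a shortcut for honesty: since $\tau_{\beta,\delta}(x)\subseteq\tau_\delta(x)$ we have $\Omega_{\beta,\delta,N}\subseteq\Omega_{\delta,N}$, and because $\delta^2\alpha\le\delta^3$ the bound is in fact an immediate consequence of Corollary \ref{cor_delta_ls}; the direct argument above is preferable only in that it isolates the genuine role of the $\beta$-visibility.
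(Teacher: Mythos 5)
Your proposal is correct and follows essentially the same route as the paper: the paper (deferring to \cite{jones2009local} and mirroring its own proof of the surface analogue, Corollary \ref{cor_gamma_delta_n}) uses exactly this square-function argument, where disjoint intervals around each counted scale give a pointwise lower bound on $\int\left|\frac{\partial^2}{\partial\tau^2}Sf(x,\tau)\right|^2 d\tau$, the $L^2$-boundedness of the $g_k$ gives a BMO bound, and the John--Nirenberg theorem yields the exponential estimate. Your closing observation is also valid and worth recording: since $\tau_{\beta,\delta}(x)\subseteq\tau_\delta(x)$ and $\delta^2\alpha\le\delta^3$, the stated bound is literally a weakening of Corollary \ref{cor_delta_ls}, so the direct argument's only added value is showing where the $\beta$-visibility would enter.
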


For each $x\in \R^n$ and $t>0$, define the cone $\Gamma(x,t)$ as
\begin{equation}\label{cone_eq}
C(x,t) = \{y\in \R^n: \pi|x-y|^2<t\}.
\end{equation}
Let the non-tangential control $S^*f(x,t)$ of $Sf(x,t)$ be defined as
\begin{equation}\label{s*f}
S^*f(x,t) = \sup_{y\in C(x,t)} |Sf(y,t)|e^{-\pi\frac{|x-y|^2}{t}},
\end{equation}
which is bounded by $C_1\|f\|_{L^\infty}$.

Recall from example \ref{ex_sin} that for $f(x) =\sin (2\pi mx)$, $|Sf(x,t)|$ is given by
$$
|Sf(x,t)| =  |\sin(2\pi mx)| (2\pi |m|^2t)e^{-\pi t|m|^2}.
$$
Thus the visibility of the scale $t = 1/(\pi m^2)$ at $x$ depends on the value of $|\sin(2\pi mx)|$. We would like to think that for all $x\in \R$, the local scale at $x$ should have the same visibility. In other words, suppose $x$ has a local scale $t$. Then for all $y$ such that $|x-y|< O(\sqrt{t})$, we should expect that $y$ also has the same local scale as $x$. This was the motivation to consider the non-tangential function $S^*f(x,t)$ as an approach to lift the visibility level locally. To improve the $\beta$-visible local scales, we consider the following non-tangential local scales \cite{jones2009local}.
\begin{definition}
({\bf Non-tangential local scales}) The non-tangential local scales of $f$ at $x$ is defined as the set 
$$
T^*_f(x) = \{t>0: S^*(x,t) \mbox{ is a local maximum}\}.
$$
\end{definition}
With the appropriate choice of the cone $C(x,t)$, in particular the one defined in \ref{cone_eq}, the non-tangential local scales also satisfy the Dilating Consistency Property.

\section{Local scales on curves and surfaces}\label{ls_surface}
In this section, we are interested in applying the study of local scales of functions defined on $\R^n$ from Section \ref{ls_images} to studying local scales on $\Gamma$, a $d$-dimensional subset of $\R^n$. In the usual sense, if $d=1$, then $\Gamma$ is a curve in $\R^n$ and for all other cases $1<d< n$, $\Gamma$ is a surface in $\R^n$. However, to fix notation, we call $\Gamma$ the $d$-dimensional surface for all $1\le d<n$.
We would like to have the counter parts of the analysis of singular integral operators on functions in a geometrical setting. Fortunately, this geometrical setting has been studied to a great extend since the early 1980's (\cite{coifman1982intŽgrale}, \cite{jones1988lipschitz}, \cite{murai1986boundedness}, \cite{christ1987polynomial}, among others). However, our motivation is from the work of P.W. Jones \cite{jones1990rectifiable}, and G. David and S. Semmes \cite{david1991harmonic}.

We begin with defining different notions of regularity and rectifiability on the surface $\Gamma$ \cite{david1993analysis}.
\begin{definition}
Let $\Gamma\subset \R^n$ with Hausdorff dimension $d$. 
\begin{enumerate}
\item We say $\Gamma$ is a $d$-dimensional {\bf Lipschitz graph} (with constant $C$) if there is a $d$-plane $P$, and $(n-d)$-plane $P^\perp$ orthogonal to $P$, and a Lipschitz function $A:P\rightarrow P^\perp$ (with norm $C$) such that
$$
\Gamma = \{p+A(p): p\in P\}.
$$
By a change of coordinate system, we can view $P\subset \R^d$ and write $\Gamma$ as
$$
\Gamma = \{(p,A(p)): p\in P\}.
$$
\item We say $\Gamma$ is (countably) {\bf rectifiable} if there is a countable family $A_j$ of Lipschitz maps from $\R^d$ to $\R^n$ such that
$$
\H^d\left(\Gamma\setminus \left(\cup_{j} A_j(\R^d)\right)\right) = 0.
$$
Here $\H^d$ denotes the $d$-dimensional Hausdorff measure.
\item We say $\Gamma$ is {\bf regular} if it is closed and if there exists a constant $C$ such that
$$
C^{-1}r^d \le \H^d(\Gamma\cap B(x,r)) \le C r^d,
$$
for all $x\in \Gamma$ and $r>0$.
\item We say $\Gamma$ has {\bf BPLG} (big pieces of Lipschitz graphs) if it is regular and if there exist $C$, $\epsilon>0$ so that for every $x\in \Gamma$ and $r>0$, there is a $d$-dimensional Lipschitz graph  $E$ (with constant $\le C$) such that
$$
\H^d(\Gamma\cap B(x,r)\cap E) \ge \epsilon r^d.
$$
\end{enumerate}
\end{definition}
From the definition, BPLG implies rectifiability. 

In \cite{jones1990rectifiable}, P. Jones provides a geometric solution to the traveling salesman problem. In particular, let $K\subset \C$ be a bounded set. P. Jones gives a necessary and sufficient condition for $K$ to lie in a rectifiable curve using an $L^\infty$-type beta number. Let $Q\subset \C$ be a dyadic square given by
$$
Q = [j2^{-n},(j+1)2^{-n}]\times [k2^{-n}, (k+1)2^{-n}],\mbox{ where } j,k,n\in \Z.
$$
Denote by $l(Q) = 2^{-n}$ the side length of $Q$. For $\lambda>0$, denote by $\lambda Q$ the square with same center as $Q$, with side length $\lambda l(Q)$, and sides parallel to the axes. For each dyadic $Q$, let $S_Q$ be an infinite strip of smallest possible width ($S_Q$ could be a line) which contains $K\cap 3Q$, and let $w(Q)$ denote the width of $S_Q$. Define
$$
\beta(Q) = \frac{w(Q)}{l(Q)},
$$ 
which is scale invariant, and measures the deviation of $K$ from a straight line ($1$-plane) near $Q$ at scale $l(Q)$. P. Jones \cite{jones1990rectifiable} then shows that $K$ is contained in a rectifiable curve $\Gamma$ if and only if
$$
\beta^2(K) := \sum_{Q}\beta^2(Q)l(Q) < \infty.
$$
In particular, if $\Gamma$ is connected, then 
$$
\beta^2(\Gamma) := \sum_{Q}\beta^2(Q)l(Q) < C l(\Gamma).
$$

In \cite{david1991harmonic}, David-Semmes provide various equivalent geometric quantities for $d$-dimensional subsets in $\R^n$. Let $\Gamma\subset \R^n$ be a closed $d$-dimensional subset, and denote by $\mu$ the $d$-dimensional Hausdorff measure restricted to $\Gamma$. Let $\psi$ be a good kernel, in particular an odd, smooth, and compactly supported function defined on $\R^n$. Denote by $\psi_t(x) = t^{-d}\psi(x/t)$. For each $t>0$, define
\begin{equation}\label{ds_psi}
\psi_t*\mu(x) = \int_\Gamma \psi_t(x-y)\ d\mu(y).
\end{equation}
$\psi_t$ being odd implies that $\psi_t*\mu = 0$ if $\Gamma$ is a $d$-plane. Thus, the quantity $|\psi_t*\mu(x)|$ also measures the deviation of $\Gamma$ from a $d$-plane near a neighborhood of scale $t$ at $x$. In \cite{david1991harmonic}, David-Semmes show that the quantity $\psi_t*\mu(x)$, $t = 2^{-m}$, is intimately related to the $L^1$ version of Jones's beta number $\beta(Q)$, where $x\in Q$ and $l(Q) = 2^{-m}$. More specifically, define
\begin{equation}\label{beta_1}
\beta_1(x,t) = \inf_{P} t^{-d}\int_{\Gamma\cap B(x,t)} \frac{dist(y,P)}{t}\ d\mu(y),
\end{equation}
where the infimum is taken over all $d$-planes $P$. Then the following two conditions are equivalent.
\begin{itemize}
\item[(P1)] Denote by $d\delta_{2^{-m}}(t)$ the Dirac mass in $t$ at $2^{-m}$. Then
$$
\sum_{m=-\infty}^\infty \left|\psi_t*\mu(x) \right|^2\ d\mu(x)d\delta_{2^{-m}}(t)
$$
is a Carleson Measure on $\Gamma\times \R^+$.
\item[(P2)] $\beta_1(x,t)^2\ d\mu(x)\frac{dt}{t}$ is a Carleson measure on $\Gamma\times \R^+$.
\end{itemize}

In this paper, we would like to replace $\psi_t$ in \eqref{ds_psi} with one that is analogous to \eqref{ut} which is smooth, symmetric and decays exponentially, with the additional properties that $\psi_t$ has zero mean and zero first moments. The quantity $\psi_t*\mu(x)$ is then used to study local scales on $\Gamma$ (compare this with $\psi_t*f(x)$ in \eqref{ut}). This can be seen as an extension of the study of local scales from \cite{jones2009local} to curves and surfaces.

Let $P_d = \{(x_1,\cdots, x_n)\in \R^n: x_i=0, \forall i>d\}\subset \R^n$ be an affine $d$-plane, and for $x\in \R^n$, let $\phi(x) = e^{-\pi |x|^2}$. We have $\int_{P_d}\phi(x)\ d\H^d(x) = 1$. Note that since $\phi$ is radially symmetric, we have that for any $P$ which is a rotation of $P_d$ at the origin,
$$
\int_P \phi(x)\ d\H^d(x) = 1.
$$ 
For each $t>0$, define
$$
K_t(x) = t^{-d/2}\phi\left(\frac{x}{\sqrt{t}}\right).
$$
Then we have
$$
\int_{P_d}K_t(x)\ d\H^d(x) = 1,\mbox{ for all } t>0.
$$
Moreover, for all $k\ge 1$,
\begin{equation}\label{Kt_dt}
\int_{P_d}\left|\frac{\d^k}{\d t^k}K_t(x)\right|\ d\H^d(x)\le \frac{c_k}{t^k}.
\end{equation}

Define $\psi_t (x)= t\frac{\d}{\d t}K_t(x)$ for $x\in \R^n$ and $t>0$, then it is easy to show that 
$$
\int_{P_d} \psi_t(x)\ d\H^d(x) = 0,
$$ 
and $\psi_t$ also has zero first moments. Now, let $\Gamma\subset \R^n$ be a $d$-dimensional subset, and let $\mu$ be the $d$-dimensional surface or Hausdorff measure restricted to $\Gamma$. For each $x\in \Gamma$ and $t>0$, define
\begin{equation}\label{SGamma}
S\Gamma(x,t) = \psi_t*\mu(x) := \int_\Gamma \psi_t(x-y)\ d\mu(y).
\end{equation}
By the property of $\psi_t$, we see that if $\Gamma$ is an affine  $d$-plane, then $S\Gamma(x,t) = 0$ for all $t>0$. Locally, the quantity $|S\Gamma(x,t)|$ measures how well $\Gamma$ is approximated by an affine $d$-plane near $x\in \Gamma$ at scale $t$. 

%

\begin{remark}\label{inv_r_t}
Since $\psi_t$ is radially symmetric, $S\Gamma(x,t)$ is invariant under rotation, translation. In other words, let $A$ be a transformation from $\R^n$ to $\R^n$ consisting of a rotation and a translation, and denote by $\Gamma_A = \{Ax:x\in \Gamma\}$. Then
$$
S\Gamma_A(Ax,t) = S\Gamma(x,t).
$$
Thus, if we define local scales using $S\Gamma(x,t)$, then the local scales are also invariant under rotation and translation.
\end{remark}

Consider the following example where $\Gamma$ is a Lipschitz curve in $\R^2$.
\begin{example}\label{1d_ex}
Let $I=[0,1]$ be an interval  in $\R$ and $A:I\rightarrow \R$ be bounded and Lipschitz. Let $\Gamma = \{f(r) =(r,A(r)): r\in I\}\subset \R^2$ with $\mu$ being the length measure on $\Gamma$. Then for a fixed $x = f(r)\in \Gamma$ we have
$$
S\Gamma(x,t) = \int_\Gamma \psi_t(x-y)\ d\mu(y) = \int_I \psi_t(f(r) -f(s)) |f'|\ ds,
$$
where $|f'(r)| = \sqrt{1+|A'(s)|^2}$. Since $\psi_t$ is radially symmetric, we have
$$
S\Gamma(f(r),t) = \int_I \psi_t(\|f(r) - f(s)\|) |f'|\ ds,
$$
Suppose $A(s) = \sin(ns)$, then $A'(s)= n\cos(ns)$, which shows that $|f'|\approx n$ for large n, and hence $S\Gamma(x,t)\approx n\int_I \psi_t(\|f(r) - f(s)\|)\ ds$. In another case, suppose the graph of $A$ consists of tents such that $|A'(r)| = n$ a.e.. Then in this case, $S\Gamma(x,t)=  (1+n^2)^{1/2}\int_I \psi_t(\|f(r) - f(s)\|)\ ds$ a.e., which is mainly determined by 
$$
\psi_t*\alpha(x) = \int_I \psi_t(\|f(r) - f(s)\|)\ ds, 
$$
where $\alpha$ is the 1-dimensional Hausdorff measure ($d\alpha = ds$).
\end{example}

To extend the results from \cite{jones2009local}, we are interested in the following questions.
\begin{question}\label{q1}
What are the necessary conditions on $\Gamma$ so that for all $k\ge 0$,
\begin{equation}\label{q1_eq1}
\sup_{x\in \Gamma}\left|\frac{\d^k}{\d t^k}\psi_t*\mu(x)\right|\le \frac{C_{k,\Gamma}}{t^k},
\end{equation}
where $C_{k,\Gamma}$ depends on $k$ and $\Gamma$.
\end{question}

\begin{question}
What are the necessary conditions on $\Gamma$ so that the $g_k$ function defined by
\begin{equation}\label{lp_gk_gamma}
g_k(f)(x) = \left[\int_{0}^\infty \left|t^k\frac{\d^k}{\d t^k}\psi_t*f(x)\right|^2\ \frac{dt}{t}\right]^{1/2},
\end{equation}
is bounded in $L^2(\Gamma)$. In particular, we would like to know if the following inequality holds.
\begin{equation}\label{q2_eq1}
\left[\int_{\Gamma} |g_k(f)(x)|^2\ d\mu(x)\right]^{1/2}\le C \|f\|_{L^2(\Gamma)}.
\end{equation}
In particular, take $f$ to be the characteristic function of $\Gamma\cap B_r$, then the above condition implies $\left[t^k\frac{\d^k}{\d t^k}\left(\psi_t*\mu(x)\right)\right] \frac{dt}{t}d\mu(x)$ is a Carleson measure.
\end{question}


We begin by considering $\Gamma = \{z(r) = (r,A(r)): r\in F\}$ to be a $d$-dimensional Lipschitz graph, for some closed subset set $F$ in $\R^d$. In this situation, we see that the surface measure $\mu$ and the Hausdorff measure $\alpha$ are equivalent. Let $E\subset \Gamma$. Denote by
$$
\|z'(r)\| = \sqrt{det[(g_{i,j})_{i,j=1,\cdots,d} ]},\mbox{ where } g_{ij} =\left \langle \frac{\d z}{\d r_i}, \frac{\d z}{\d r_j}\right\rangle,
$$
and
\begin{equation}\label{gamma_star}
\|\Gamma\|_* = \sup_{r\in F} \|z'(r)\|.
\end{equation}
We have
$$
\mu(E) = \int_\Gamma \chi_E(x)\ d\mu(x) = \int_{F} \chi_E(z(r)) \|z'(r)\|\ dr,
$$
and
$$
\alpha(E) = \int_{F}\chi_E(z(r))\ dr.
$$
But $1\le \|z'(r)\| \le \|z\|_{lip} = C$. This implies
$$
\alpha(E) \le \mu(E)\le C\alpha(E).
$$
Thus it is equivalent to consider either $\mu$ or $\alpha$ on $\Gamma$.

\begin{proposition}\label{prop_gamma_1}
Let $\Gamma$ be a $d$-dimensional Lipschitz graph with the $d$-dimensional surface measure $\mu$. Then for all $k\ge 0$, we have
\begin{equation}\label{Sg_lips}
\sup_{x\in \Gamma}\left|\frac{\d^k}{\d t^k}S\Gamma(x,t)\right|\le \frac{C_{k}}{t^k}\|\Gamma\|_{*},
\end{equation}
where $\Gamma$ is defined in \eqref{gamma_star} and $C_k$ depends only on $k$ and $d$. 
\end{proposition}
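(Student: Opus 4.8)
The plan is to reduce the estimate on the $d$-dimensional surface $\Gamma$ to the flat estimate \eqref{Kt_dt} by exploiting two facts: that the surface measure is comparable to Lebesgue measure on the base $F$ through the factor $\|z'(r)\|$, and that the graph map $r\mapsto z(r)=(r,A(r))$ never decreases distances. First I would differentiate under the integral sign in \eqref{SGamma}: since $\psi_t$ and all of its $t$-derivatives are Schwartz in $x$ and depend smoothly on $t>0$ (so that on any compact $t$-interval the integrands are uniformly dominated by a fixed Gaussian, integrable against $\|z'(r)\|\,dr\le\|\Gamma\|_*\,dr$), dominated convergence justifies
$$
\frac{\d^k}{\d t^k}S\Gamma(x,t)=\int_\Gamma\Big(\frac{\d^k}{\d t^k}\psi_t\Big)(x-y)\,d\mu(y)=\int_F\Big(\frac{\d^k}{\d t^k}\psi_t\Big)(x-z(r))\,\|z'(r)\|\,dr.
$$
Bounding $\|z'(r)\|\le\|\Gamma\|_*$ and moving the absolute value inside pulls out the factor $\|\Gamma\|_*$, so the whole statement follows once I prove the flat bound
$$
\int_F\Big|\frac{\d^k}{\d t^k}\psi_t(x-z(r))\Big|\,dr\le \frac{C_k}{t^k},
$$
uniformly in $x=z(r_0)\in\Gamma$, with $C_k$ depending only on $k$ and $d$.

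Next I would record the pointwise shape of the kernel. Because $K_t(u)=t^{-d/2}e^{-\pi|u|^2/t}$ and $\psi_t=t\,\d_t K_t$, each derivative has the form $\frac{\d^k}{\d t^k}\psi_t(u)=t^{-d/2-k}P_k\!\big(\pi|u|^2/t\big)e^{-\pi|u|^2/t}$ for a polynomial $P_k$ depending only on $k$ and $d$ (this is immediate by induction, since $\d_t$ acting on $\pi|u|^2/t$ and on the prefactor each contributes a factor $t^{-1}$ and a polynomial adjustment). Since a polynomial is dominated by the exponential, there is $C_k=C_k(k,d)$ with $|P_k(y)|e^{-y}\le C_k e^{-y/2}$ for all $y\ge 0$, and hence
$$
\Big|\frac{\d^k}{\d t^k}\psi_t(u)\Big|\le C_k\,t^{-d/2-k}e^{-\pi|u|^2/(2t)},
$$
a radial function that is now \emph{monotone decreasing} in $|u|$. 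This Gaussian domination is the device that replaces the exact flat identity: it lets me discard the oscillating polynomial factor while retaining enough decay to integrate.

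The key geometric step is that $z$ is distance non-decreasing. Writing $x=z(r_0)$, we have $|x-z(r)|^2=|r_0-r|^2+|A(r_0)-A(r)|^2\ge|r_0-r|^2$, so $|x-z(r)|\ge|r_0-r|$; since the dominating Gaussian above is decreasing, $\big|\frac{\d^k}{\d t^k}\psi_t(x-z(r))\big|\le C_k t^{-d/2-k}e^{-\pi|r_0-r|^2/(2t)}$. The right-hand side is defined for all $r\in\R^d$ and nonnegative, so integrating over $F\subseteq\R^d$ and evaluating the resulting Gaussian integral gives
$$
\int_F\Big|\frac{\d^k}{\d t^k}\psi_t(x-z(r))\Big|\,dr\le C_k\,t^{-d/2-k}\int_{\R^d}e^{-\pi|r_0-r|^2/(2t)}\,dr=C_k\,2^{d/2}\,t^{-k},
$$
which is the flat bound, and combining with the first paragraph yields \eqref{Sg_lips}. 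The main obstacle is precisely the non-flatness of $\Gamma$ together with the non-monotonicity of $\frac{\d^k}{\d t^k}\psi_t$: one cannot transport \eqref{Kt_dt} directly, because for fixed $x=z(r_0)$ the argument $x-z(r)$ ranges over the translated graph rather than over a $d$-plane. The Gaussian-domination bound, used in tandem with the non-expansion $|x-z(r)|\ge|r_0-r|$, is what resolves this and collapses everything to a single Gaussian integral over $\R^d$; the only remaining point needing care is the interchange of $\frac{\d^k}{\d t^k}$ and the integral, which is routine given the rapid decay of the kernel and the finiteness of $\|z'(r)\|$.
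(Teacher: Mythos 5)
Your proposal is correct and follows essentially the same route as the paper: differentiate under the integral, bound the surface-measure density by $\|\Gamma\|_*$, write $\frac{\d^k}{\d t^k}\psi_t$ as $t^{-k}$ times polynomial-in-$\pi|u|^2/t$ Gaussians, and exploit $|x-z(r)|\ge |r_0-r|$ to reduce everything to a Gaussian integral over $\R^d$. The only (cosmetic) difference is how the polynomial factor is tamed: the paper's Remark on \eqref{est_1} splits the radial integral at $\gamma=\sqrt{(k+1/2)/\pi}$ and uses monotonicity of $u^{p}e^{-u}$ past its maximum, whereas you absorb the polynomial into half the exponential via $|P_k(y)|e^{-y}\le C_k e^{-y/2}$, which is a slightly cleaner way to reach the same Gaussian domination.
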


\begin{remark}
Note that given any closed subset $F$ in $\R^d$ and a Lipschitz function $A:F\rightarrow \R^{n-d}$, the function $A$ can be extended to $\R^n$ with the same Lipschitz constant using an extension theorem of Whitney (see Theorem 3 of Chapter VI, Section 2 of A. Stein \cite{stein1970}). Thus without lost of generality, we may assume
$$
\Gamma = \{(x,A(x)): x\in \R^d, \|A\|_{Lip} <\infty\}.
$$
\end{remark}

\begin{remark}\label{remark_bound1}
Let $\Gamma$ be defined as in Proposition \ref{prop_gamma_1} with $F=\R^d$. Fix an $x_0 = (r_0,A(r_0))\in\Gamma$. For all $k\ge 0$, let 
\begin{equation}\label{psi_tk}
\psi_{t,k}(x_0-x) = c_d t^{-d/2}\left[\pi \frac{|x_0-x|^2}{t}\right]^k e^{-\pi\frac{|x_0-x|^2}{t}},\ x = (r,A(r))\in\Gamma.
\end{equation}
The first estimate is the following.
\begin{equation}\label{psi_bound}
\int_{\R^d}\psi_{t,k}(x_0-x)\ dr \le C_k,
\end{equation}
where $C_k$ is a constant that depends only on $k$ and $d$. Indeed,
\begin{equation}\label{est_1}
\begin{split}
&\int_{\R^d}\psi_{t,k}(x_0-x)\ dr = c_d\int_{\R^d} t^{-d/2}\left[\pi\frac{|r-r_0|^2 + |A(r)-A(r_0)|^2}{t}\right]^k e^{-\pi\frac{|r-r_0|^2 + |A(r)-A(r_0)|^2}{t}}\ dr \\
&=c_d \int_{\R^d}\left[\pi(|s|^2 + |\frac{A(\sqrt{t}s) - A(r_0)}{\sqrt{t}}|^2)\right]^ke^{-\pi(|s|^2 + |\frac{A(\sqrt{t}s) - A(r_0)}{\sqrt{t}}|^2)}\ ds\\
&=c_d\int_{S^d}\int_0^\infty \left[\pi(\gamma^2 + |\frac{A(\sqrt{t}\gamma) - A(r_0)}{\sqrt{t}}|^2)\right]^ke^{-\pi(\gamma^2 + |\frac{A(\sqrt{t}\gamma) - A(r_0)}{\sqrt{t}}|^2)}\ \gamma\ \d\gamma\ \d\theta\\
&\le c_d\omega_d\int_0^\infty \left[\pi(\gamma^2 + |\frac{A(\sqrt{t}\gamma) - A(r_0)}{\sqrt{t}}|^2)\right]^{k+1/2}e^{-\pi(\gamma^2 + |\frac{A(\sqrt{t}\gamma) - A(r_0)}{\sqrt{t}}|^2)}\  \d\gamma\ \d\theta\\
&\le c_d\omega_d\int_{\gamma<\sqrt{\frac{k+1/2}{\pi}}} (k+1/2)^{k+1/2}e^{-{(k+1/2)}}\ ds + \int_{\gamma\ge\sqrt{\frac{k+1/2}{\pi}}} (\pi\gamma^2)^{k+1/2} e^{-\pi \gamma^2}\ d\gamma\\
&\le (k+1/2)^{(k+1/2)}e^{-(k+1/2)}\omega_d\frac{\sqrt{k+1/2}}{\pi} + c_k = C_k<\infty.
\end{split}
\end{equation}
The constant $[\frac{p}{e}]^p$ is large for $p\gg e$. Note that $(\pi \gamma^2)^pe^{-\pi\gamma^2}$ achieves its maximum when $\gamma = \sqrt{\frac{p}{\pi}}$, and 
$$
p^pe^{-p} = \sup_{\gamma>0} \{(\pi \gamma^2)^pe^{-\pi\gamma^2}\}.
$$
Denote by 
$$
\|A'\|_{L^\infty} := \|\nabla A\|_{L^\infty} = \sup_{r\in\R^d} \left[\sum_{i=1}^d\left|\frac{\d A}{\d r_i}(r) \right|^2 \right]^{1/2}.
$$ 
The following is another estimate for $\int_{\R^d}\psi_{t,k}(x_0-x)\ dr$.
\begin{equation}\label{est_2}
\int_{\R^d}\psi_{t,k}(x_0-x)\ dr \le \int_{\R^d} t^{-d/2}(1+\|A'\|_{L^\infty}^{2})^k\left[\pi\frac{|r-r_0|^2}{t}\right]^k e^{-\pi\frac{|r-r_0|^2}{t}}\ dr = A_k(1+\|A'\|_{L^\infty}^{2})^k,
\end{equation}
where
$$
A_k = c_d\int_{\R^d} (\pi|r|)^{2k}e^{-\pi |r|^2}\ dr.
$$
Note that the first estimate \eqref{est_1} does not depend on $\|A'\|_{L^\infty}$, while the second estimate \eqref{est_2} does. Thus we see that if $\|A'\|_{L^\infty}$ is large then \eqref{est_1} is a better estimate than \eqref{est_2}.
\end{remark}

\begin{proof} (Proof of Proposition \ref{prop_gamma_1})
From remark \ref{inv_r_t}, we may assume $P=\R^d$, and $P^\perp = \R^{n-d}$. Without lost of generality, we may assume
$$
\Gamma = \{(x,A(x)): x\in \R^d, \|A\|_{Lip} <\infty\}.
$$
Fix an $x_0\in \Gamma$. Observe that for all $x\in \Gamma$,
$$
\frac{\d^k}{\d t^k} \psi_t(x_0-x) = \frac{1}{t^k} \sum_{i=0}^{k+1} c_i \psi_{t,i}(x_0-x),
$$
where $\psi_{t,i}(x_0-x)$ is defined in \eqref{psi_tk}, and $c_i$ are constants that depend only on $d$. This implies
\begin{equation}
\begin{split}
\left|\frac{\d^k}{\d t^k}S\Gamma(x_0,t)\right| &\le \int_\Gamma \left|\frac{\d^k}{\d t^k}\psi_t(x_0-x)\right|\ d\mu(x) = \int_{\R^d} \left|\frac{\d^k}{\d t^k}\psi_t((r_0,A(r_0))-(r,A(r)))\right| \|\Gamma\|_{*}\ dr\\
&= \frac{\|\Gamma\|_{*}}{t^k} \sum_{i=0}^{k+1} |c_i|\int_{\R^d} \psi_{t,i}((r_0,A(r_0))-(r,A(r)))\ dr\le \frac{\|\Gamma\|_{*}}{t^k}\sum_{i=0}^{k+1} |c_i|C_i,
\end{split}
\end{equation}
where the last inequality follows from \eqref{est_1} in remark \ref{remark_bound1}. Thus,
$$
\left|\frac{\d^k}{\d t^k}S\Gamma(x_0,t)\right| \le \frac{C_k}{t^k}\|\Gamma\|_{*},
$$
where $C_k$ is a new constant that depends only on $d$ and $k$. Since $x_0\in\Gamma$ is arbitrary, we have that \eqref{Sg_lips} holds.
\end{proof}

Let $z:\R^d\rightarrow \R^{n}$ be a continuous function (not necessarily Lipschitz or differentiable), and suppose 
$$
\Gamma = \{f(r): r\in \R^d\}.
$$
For $x=z(r)$, let $d\alpha(x) = dr$, and define
$$
S\Gamma(x,t) = \psi_t*\alpha(x) = \int_{\R^d} \psi_t(z(r) - z(s))\ ds.
$$ 
Then as a consequence to Proposition \ref{prop_gamma_1}, we have
\begin{equation}\label{sm_sg_haus}
\left|\frac{\d^k}{\d t^k}S\Gamma(x_0,t)\right| \le \frac{C_k}{t^k},
\end{equation}
where $C_k$ does not depend on $\Gamma$. Note that in this case, we get the same bound as in \eqref{Sg_lips} but the quantity $\|\Gamma\|_*$ is removed. Thus by restricting to a $d$-dimensional Hausdorff measure, the condition on $f$ can be weakened.

Next, we would like to address Question 2, in particular, the condition \eqref{q2_eq1}. To this end, we follow  \cite{david1991wavelets}.

\begin{proposition}(Part II, Section 6, Example 6.7 in \cite{david1991wavelets})\label{david1991}
Let $0<d\le n$ be integers, and let $k(x)$ be a $C^\infty$ function, defined on $\R^n\setminus {0}$, and such that
\begin{equation}\label{sio_eq1}
|\nabla^jk(x)|\le C(j) |x|^{-d-j} \mbox{ for all } j\ge 0,
\end{equation}
and
\begin{equation}\label{sio_eq2}
\sup_{0<\epsilon<M}\left|\int_{\epsilon<|t|<M} k(t\theta)|t|^{d-1}\ dt \right|\le C \mbox{ for all } \theta\in S^{n-1}.
\end{equation}
Let $A:\R^d\rightarrow \R^{n-d}$ be a Lipschitz function. Then the kernel
$$
K(r,s) = k(r-s,A(r) - A(s))
$$
defines a bounded singular integral operator on $L^2(\R^d)$.
\end{proposition}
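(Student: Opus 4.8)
The plan is to split the statement into two independent tasks: first, to check that $K(r,s)=k(z(r)-z(s))$, where $z(r)=(r,A(r))$, is a standard Calder\'on--Zygmund kernel on $\R^d$; and second, to establish the genuinely analytic fact of $L^2(\R^d)$-boundedness, for which the kernel bounds alone never suffice. The first task is elementary and rests on the bi-Lipschitz comparison
\[
|r-s| \;\le\; |z(r)-z(s)| \;\le\; (1+\|A\|_{Lip}^2)^{1/2}\,|r-s|,
\]
which holds because $|z(r)-z(s)|^2=|r-s|^2+|A(r)-A(s)|^2$ and $A$ is Lipschitz. Combining this with \eqref{sio_eq1} at $j=0$ yields the size bound $|K(r,s)|\le C|r-s|^{-d}$, and combining it with \eqref{sio_eq1} at $j=1$, the chain rule, and $|\nabla z|\le (1+\|A\|_{Lip}^2)^{1/2}$ yields the smoothness (H\"ormander) bounds $|\nabla_r K(r,s)|+|\nabla_s K(r,s)|\le C|r-s|^{-d-1}$. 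I would record these as a short preliminary lemma.

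For the $L^2$-bound I would invoke the $T(1)$ theorem of David--Journ\'e: once $K$ is a standard kernel, it remains to verify the weak boundedness property and that $T1$ and $T^\ast 1$ lie in $\mathrm{BMO}$. Since $k$ is not assumed odd, $T1$ need not vanish, so this is a real condition rather than a triviality; this is exactly where \eqref{sio_eq2} does its work, as it is precisely the uniform control of the truncated ray integrals $\int_{\epsilon<|t|<M}k(t\theta)|t|^{d-1}\,dt$ that lets one define the principal value and place $T1$ in $\mathrm{BMO}$, while the weak boundedness property follows from the kernel estimates together with this cancellation. In the flat model $A\equiv 0$ the operator is convolution on $\R^d$ with kernel $u\mapsto k(u,0)$, and \eqref{sio_eq1}--\eqref{sio_eq2} are then the classical size, smoothness, and cancellation conditions guaranteeing a bounded Fourier multiplier; this serves as the base case.

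An alternative and, in my view, more transparent route is the method of rotations combined with the Coifman--McIntosh--Meyer theorem. Taylor expanding $k$ in its last $n-d$ (normal) variables $y$ about $0$ gives
\[
k(r-s,A(r)-A(s)) \;=\; \sum_{\alpha}\frac{1}{\alpha!}\,\partial_y^\alpha k(r-s,0)\,\big(A(r)-A(s)\big)^{\alpha},
\]
so that $T$ becomes a series of generalized Calder\'on commutators: the base kernels obey the scaling $|\partial_y^\alpha k(u,0)|\le C(|\alpha|)|u|^{-d-|\alpha|}$ from \eqref{sio_eq1}, while $|A(r)-A(s)|^{|\alpha|}\le \|A\|_{Lip}^{|\alpha|}|r-s|^{|\alpha|}$ restores the correct homogeneity $|r-s|^{-d}$. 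Each summand is a commutator whose $L^2$-norm, by the CMM estimates, grows at most polynomially in $|\alpha|$, and the factorial weights $1/\alpha!$ then beat that growth, producing an absolutely convergent series with final constant depending only on $d$, $n$, $\|A\|_{Lip}$, and the constants in \eqref{sio_eq1}--\eqref{sio_eq2}.

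The main obstacle is the $L^2$-bound itself: the Calder\'on--Zygmund kernel conditions of the first step are necessary but far from sufficient, and all the depth is concentrated either in verifying $T1\in\mathrm{BMO}$ with constants uniform in the Lipschitz norm, or equivalently in the polynomial-in-degree control of the commutator norms. The cleanest write-up will quote the CMM theorem (and its higher-dimensional extensions) or the $T(b)$ machinery of David directly rather than reprove it, reserving the original effort for the kernel verification and for the bookkeeping that shows \eqref{sio_eq2} supplies exactly the cancellation the flat base case demands.
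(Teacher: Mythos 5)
A preliminary remark: the paper does not prove this proposition at all; it is imported verbatim from Part II, Section 6, Example 6.7 of \cite{david1991wavelets} and used as a black box in the proof of Corollary \ref{cor_gk}. So your attempt must be judged on its own. Your preliminary step (the standard Calder\'on--Zygmund kernel estimates via the bi-Lipschitz comparison $|r-s|\le |z(r)-z(s)|\le (1+\|A\|_{Lip}^2)^{1/2}|r-s|$) is correct but elementary; the two routes you then propose for the $L^2$ bound each have a genuine gap, located exactly at the analytic core you yourself identify.

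First, the ``more transparent route'' via Taylor expansion is invalid for the kernels in the hypothesis. The identity
$$
k(r-s,A(r)-A(s))=\sum_{\alpha}\frac{1}{\alpha!}\,\partial_y^\alpha k(r-s,0)\,\bigl(A(r)-A(s)\bigr)^{\alpha}
$$
requires $y\mapsto k(u,y)$ to be real-analytic on the ball $|y|\le \|A\|_{Lip}|u|$ and to be represented there by its Taylor series at $y=0$; the proposition only assumes $k\in C^\infty(\R^n\setminus\{0\})$, and a $C^\infty$ function need not equal its Taylor series (the series need not even converge). Even in the analytic model case $k(u,y)=1/(u+iy)$ (the Cauchy kernel, $d=1$, $n=2$), the expansion converges only when $\|A'\|_{L^\infty}<1$; removing that smallness restriction is a deep part of the Coifman--McIntosh--Meyer theorem and of David's subsequent work, not bookkeeping absorbed by the factors $1/\alpha!$.

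Second, in the $T(1)$ route you assert that \eqref{sio_eq2} ``lets one place $T1$ in $\mathrm{BMO}$.'' This is unsupported, and it is in effect the entire theorem. Condition \eqref{sio_eq2} is a cancellation condition on the flat kernel $u\mapsto k(u,0)$ along rays through the origin; it controls the convolution operator when $A\equiv 0$, but says nothing directly about $T1(r)=\mathrm{p.v.}\int k(r-s,A(r)-A(s))\,ds$ for curved $A$. By the converse direction of the $T(1)$ theorem, $T1\in\mathrm{BMO}$ (together with $T^{*}1\in\mathrm{BMO}$ and the weak boundedness property) is \emph{equivalent} to the $L^2$-boundedness you are trying to prove; for the Cauchy kernel, verifying it is precisely Calder\'on's problem, which is why $T(b)$-type theorems (with $b=1+iA'$) were invented. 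A repairable outline would instead split $k$ into odd and even parts: \eqref{sio_eq2} holds automatically for the odd part (the integrand $k(t\theta)|t|^{d-1}$ is odd in $t$), and odd kernels satisfying \eqref{sio_eq1} are handled by the method of rotations reducing to the Cauchy integral on Lipschitz curves; the even part genuinely needs \eqref{sio_eq2} together with the machinery of \cite{david1991wavelets} --- that is, essentially the result being proved. As written, both of your routes defer the main estimate to a step that is either invalid or circular.
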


\begin{remark}
An equivalent condition to \eqref{sio_eq2} is the following
\begin{equation}\label{sio_eq3}
\sup_{\epsilon>0} \left|\int_{\{|t|>\epsilon\}}|t|^{-d/2}k(\theta/\sqrt{t})\frac{dt}{t} \right| \le C\mbox{ for all } \theta\in S^{n-1}.
\end{equation}
\end{remark}

Using the previous Proposition, we would like to show that \eqref{q2_eq1} holds.

\begin{corollary}\label{cor_gk}
Let $\Gamma = \{x=(r,A(r)): r\in \R^d\}$ be a $d$-dimensional Lipschitz graph, for some Lipschitz function $A:\R^d\rightarrow \R^{n-d}$. Then the Littlewood-Paley function $g_k$ defined in \eqref{lp_gk_gamma} is bounded in $L^2(\Gamma)$. In particular, let $x = (r,A(r))$
\begin{equation}\label{cor_gk_eq1}
\left[\int_{\R^d} |g_k(f)(x)|^2\ dr\right]^{1/2}\le C \left[\int_{\R^d}|f(x)|^2\ dr\right]^{1/2},
\end{equation}
\end{corollary}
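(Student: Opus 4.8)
The plan is to recognize $g_k$ as a singular integral operator taking values in the Hilbert space $\H = L^2\big((0,\infty),\tfrac{dt}{t}\big)$, and to invoke the Hilbert-space-valued analogue of Proposition \ref{david1991}. Concretely, set $\vec\kappa(x) = \left(t^k\frac{\d^k}{\d t^k}\psi_t(x)\right)_{t>0}$, regarded as a function $\R^n\setminus\{0\}\to\H$, and write $z(s)=(s,A(s))$. Differentiating under the integral sign (justified by the bounds behind \eqref{psi_tk}) gives
\begin{equation}
t^k\frac{\d^k}{\d t^k}\left(\psi_t*f\right)(x) = \int_{\R^d} \kappa_t\big(z(r)-z(s)\big)\, f(z(s))\,ds, \qquad x=z(r),
\end{equation}
where $\kappa_t$ is the $t$-component of $\vec\kappa$. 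Hence $g_k(f)(z(r)) = \left\|\int_{\R^d}\vec\kappa(z(r)-z(s))f(z(s))\,ds\right\|_\H$, and the estimate \eqref{cor_gk_eq1} is exactly the assertion that the operator with $\H$-valued kernel $K(r,s)=\vec\kappa\big(r-s,\,A(r)-A(s)\big)$ is bounded from $L^2(\R^d)$ into $L^2(\R^d;\H)$. The hypotheses \eqref{sio_eq1} and \eqref{sio_eq2} are then read with $|\cdot|$ replaced by $\|\cdot\|_\H$, and verifying them is the substance of the proof.

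For the size and smoothness bound \eqref{sio_eq1}, I would use the explicit profile from Remark \ref{remark_bound1}: $t^k\frac{\d^k}{\d t^k}\psi_t(x)$ is $t^{-d/2}$ times a fixed polynomial in $\pi|x|^2/t$ times $e^{-\pi|x|^2/t}$, so $\kappa_t(x) = t^{-d/2}\Phi(|x|^2/t)$ for a smooth, rapidly decaying $\Phi$. The substitution $v=|x|^2/t$ then yields
\begin{equation}
\|\vec\kappa(x)\|_\H^2 = \int_0^\infty |\kappa_t(x)|^2\frac{dt}{t} = |x|^{-2d}\int_0^\infty v^{d-1}|\Phi(v)|^2\,dv = C_0\,|x|^{-2d},
\end{equation}
and differentiating $\kappa_t$ in $x$ (each derivative producing another term of the same type) and repeating the substitution gives $\|\nabla^j\vec\kappa(x)\|_\H\le C(j)|x|^{-d-j}$ for every $j\ge 0$.

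The cancellation condition is the crux. Since $\kappa_t$ is radial, the integral in \eqref{sio_eq2} is independent of $\theta$, and the substitution $v=\rho^2/t$ reduces its $t$-component to $\int_{\epsilon^2/t}^{M^2/t} v^{d/2-1}\Phi(v)\,dv$. The essential identity is
\begin{equation}
\int_0^\infty v^{d/2-1}\Phi(v)\,dv = c\int_{P_d}\kappa_t(x)\,d\H^d(x) = c\,t^k\frac{\d^k}{\d t^k}\int_{P_d}\psi_t(x)\,d\H^d(x)=0,
\end{equation}
which holds precisely because $\psi_t$ has zero mean on every $d$-plane. Writing the truncated integral as a difference of two tails of this vanishing integral and exploiting the scale invariance of $\tfrac{dt}{t}$, the $\H$-norm is dominated, uniformly in $\epsilon$, $M$ and $\theta$, by $\left(\int_0^\infty\big|\int_w^\infty v^{d/2-1}\Phi(v)\,dv\big|^2\tfrac{dw}{w}\right)^{1/2}$, which is finite from the behaviour of $\Phi$ at $0$ and at $\infty$. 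With \eqref{sio_eq1} and \eqref{sio_eq2} established, the vector-valued form of Proposition \ref{david1991} delivers \eqref{cor_gk_eq1}.

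I expect the main obstacle to be this cancellation step, together with the justification that Example 6.7 of \cite{david1991wavelets} extends to $\H$-valued kernels. A naive scale-by-scale application of the scalar proposition is hopeless: each $\kappa_t*\cdot$ has operator norm of order one uniformly in $t$, so $\int_0^\infty\|K_t\|_{L^2\to L^2}^2\,\tfrac{dt}{t}=\infty$. The $L^2$ bound can therefore come only from cancellation across scales, which is exactly what the zero-mean identity above supplies.
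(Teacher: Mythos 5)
Your proposal is correct, and its skeleton is the same as the paper's: realize $g_k$ as a singular integral with values in a Hilbert space of square-integrable functions of the scale variable (your $\H = L^2\bigl((0,\infty),\tfrac{dt}{t}\bigr)$ with the weight $t^k$ kept in the kernel is, after moving the weight into the measure, the same space as the paper's $L^2(t\,dt)$ for $k=0$, resp.\ $L^2(t^{2k-1}\,dt)$ for general $k$), then invoke the Hilbert-space-valued version of Proposition \ref{david1991} on the Lipschitz graph. The genuine difference is in how the cancellation hypothesis --- which you rightly call the crux --- is verified. The paper checks the equivalent form \eqref{sio_eq3}: for the Gaussian, the integrand in the truncated integral is symmetric under swapping the scale variable $t$ with the truncation variable $r$, so the inner integral telescopes exactly to $t^{-d/2-1}K_\epsilon(\theta/\sqrt{t})$, whose $\H$-norm is then computed in closed form ($C_d\pi^{-d}$, uniform in $\epsilon$). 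You instead check \eqref{sio_eq2} by a structural argument: reduce each $t$-component to $\int_{\epsilon^2/t}^{M^2/t} v^{d/2-1}\Phi(v)\,dv$, observe this is a difference of tails $G(\epsilon^2/t)-G(M^2/t)$ of an integral that vanishes because $\psi_t$ and its $t$-derivatives have zero mean on $d$-planes, and use scale invariance of $\tfrac{dt}{t}$ to bound the $\H$-norm by $\bigl(\int_0^\infty |G(w)|^2\,\tfrac{dw}{w}\bigr)^{1/2}$, which is finite precisely because $G(0)=0$. Your route is more robust: it treats all $k$ at once and applies to any kernel of the form $t^{-d/2}\Phi(|x|^2/t)$ with rapid decay and vanishing plane-mean, whereas the paper's computation exploits a Gaussian-specific identity and is written out only for $g_0$, with the general case asserted. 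What the paper's route buys is an explicit constant and no need to analyze tail behavior. Finally, note that both arguments lean on the same point left without detailed proof, namely that Example 6.7 of \cite{david1991wavelets} extends to Hilbert-space-valued kernels: the paper asserts this by analogy with Stein's Chapter IV treatment of vector-valued singular integrals, and you correctly flag it as the remaining obstacle rather than claiming it.
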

\begin{proof}
We will show the case for $g_0$. The general case $g_k$ will follow using the same techniques. We have
$$
g_0(f)(x) =\left[\int_0^\infty \left|\psi_t*f(x) \right|^2\ \frac{dt}{t}\right]^{1/2} =  \left[\int_0^\infty \left|t\frac{\d}{\d t}K_t*f(x) \right|^2\ \frac{dt}{t}\right]^{1/2}.
$$
We will follow Chaper IV, section 1.3 from E. Stein \cite{stein1970} by verifying the conditions \eqref{sio_eq1} and \eqref{sio_eq3} in the context of Hilbert space-valued functions.

Let $\H$ be the $L^2$ space on $(0,\infty)$ with measure $t\ dt$, i.e.
$$
\H = \{f: |f|_{\H}^2 := \int_0^\infty |f(t)|^2t\ dt < \infty\}.
$$
Let $k(x) = \frac{\d K_t}{\d t}(x)$. We will show that $k(x)$ satisfies
\begin{equation}\label{sio_H_eq1}
|\nabla^jk(x)|_\H\le C(j) |x|^{-d-j} \mbox{ for all } j\ge 0,
\end{equation}
and
\begin{equation}\label{sio_H_eq2}
\sup_{\epsilon>0} \left|\int_{\{|t|>\epsilon\}}|t|^{-d/2}k(\theta/\sqrt{t})\frac{dt}{t} \right|_\H \le C\mbox{ for all } \theta\in S^{n-1}.
\end{equation}
The condition \eqref{sio_H_eq1} clearly holds for it is a direct computation of the integral
$$
\int_0^\infty \left|\nabla^jk(x) \right|^2t\ dt.
$$
See for instance Chaper IV, Section 1.3 from E. Stein \cite{stein1970} when $K_t$ is the poisson kernel. We will show here condition \eqref{sio_H_eq2}. Since the kernel $k(x)$ is symmetric, it suffices to show 
\begin{equation}\label{sio_H_eq3}
\sup_{\epsilon>0} \left|\int_{\{r>\epsilon\}}r^{-d/2}k(\theta/\sqrt{r})\frac{dr}{r} \right|_\H \le C\mbox{ for all } \theta\in S^{n-1}.
\end{equation}
For each $\epsilon>0$, we have
\begin{equation*}
\begin{split}
&\int_0^\infty  \left|\int_{\{r>\epsilon\}}r^{-d/2}k(\theta/\sqrt{r})\frac{dr}{r} \right|^2 t\ dt = \int_0^\infty \left|\int_{\{r>\epsilon\}}r^{-d/2}\frac{\d K_t}{\d t}(\theta/\sqrt{r})\frac{dr}{r} \right|^2 t\ dt\\
&= \int_0^\infty \left|\int_{\{r>\epsilon\}}r^{-d/2-1}t^{-d/2-1}\left[-(d/2) + \frac{\pi |\theta|^2}{t r}\right]e^{-\pi\frac{|\theta|^2}{tr}}\ dr \right|^2 t\ dt\\
&= \int_0^\infty \left|t^{-d/2-1} \int_\epsilon^\infty \frac{\d}{\d r}K_r(\theta/\sqrt{t})\ dr\right|^2 t\ dt = \int_0^\infty \left|t^{-d/2-1}K_\epsilon(\theta/\sqrt{t})\right|^2 t\ dt\\
&=  \int_0^\infty \left|t^{-d/2-1}\epsilon^{-d/2}e^{-\pi \frac{|\theta|^2}{t\epsilon}}\right|^2 t\ dt =(\pi |\theta|^2)^{-d} \int_0^\infty r^{d-1}e^{-2r}\ dr = C_d \pi^{-d},
\end{split}
\end{equation*}
where $C_d = \int_0^\infty r^{d-1}e^{-2r}\ dr <\infty$ with $1\le d\le n$. Thus \eqref{sio_H_eq3} holds with $C = \left[C_d\pi^{-d}\right]^{1/2}$.
\end{proof}

\begin{remark}
To show \eqref{cor_gk_eq1} for general $g_k$, it suffices to show that the function $h_i(f)$ defined by
$$
h_i(f)(x) = \left[\int_0^\infty \left|t^i\frac{\d^k}{\d t^i}K_t*f(x) \right|^2\ \frac{dt}{t}\right]^{1/2}
$$
is bounded on $L^2(\Gamma)$. In this case, the Hilbert space $\H$ in consideration is the $L^2$ space on $(0,\infty)$ with measure $t^{2k-1}\ dt$
\end{remark}

\begin{remark}
In Corollary \ref{cor_gk}, we consider $\Gamma$ to be a Lipschitz graph. However, as noted in \cite{david1991wavelets} (Part III), the kernel $k$ satisfying \eqref{sio_eq1} and \eqref{sio_eq2} also defines a bounded singular operator from $L^2(\Gamma)$ to $L^2(\Gamma)$ with $\Gamma$ satisfying a weaker constraint, in particular, if $\Gamma$ has BPLG. This shows that Corollary \ref{cor_gk} also holds for $\Gamma$ having BPLG.
\end{remark}

%
%

For the remaining part of the paper, we assume that $\Gamma\subset \R^n$ is a $d$-dimensional surface with $\mu$ being either the surface or Hausdorff measure such that for all $k\ge 0$,
\begin{equation}\label{sm_cond1}
\sup_{x\in \Gamma}\left|\frac{\d^k}{\d t^k}S\Gamma(x,t)\right|\le \frac{C_{k,\Gamma}}{t^k}.
\end{equation}
We will also assume that the Littlewood-Paley $g_k$ function is bounded in $L^2(\Gamma)$, i.e.
\begin{equation}\label{sm_cond2}
\|g_k(f)\|_{L^2(\Gamma)}  =\left[ \int_\Gamma \int_0^\infty \left|t^k\frac{\d^k K_t}{\d t^k}*f(x) \right|^2\frac{dt}{t}\ d\mu(x)\right]^{1/2}\le C \|f\|_{L^2(\Gamma)}
\end{equation}

\begin{definition}
Let $\Gamma$ be such that \eqref{sm_cond1} holds. For each $x\in \Gamma$, the set of local scales $T_\Gamma(x)$ of $\Gamma$ at $x$ is defined as
$$
T_\Gamma(x) = \{t\in (0,\infty): |S\Gamma(x,t)| \mbox{ is a local maximum}\}.
$$
\end{definition}

It is easy to show the above definition of local scales satisfy the dilating consistency property. In other words, for $\delta >0$, let $d_\delta \Gamma$ denote the dilated version of $\Gamma$, i.e.
$$
d_\delta\Gamma=\{\delta x: x\in \Gamma\}.
$$
Then the following result holds.
\begin{proposition} We have
\begin{equation}
T_{d_\delta\Gamma}(\delta x) = \{\delta^{-2} t: t\in T_\Gamma(x)\}.
\end{equation}
\end{proposition}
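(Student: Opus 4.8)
The plan is to follow the template of the function-case dilating-consistency result (the Proposition asserting $s=-2$): reduce the whole statement to a single scaling identity for $S\Gamma$ under the map $x\mapsto\delta x$, and then read off the effect on local maxima. Note first that the hypothesis \eqref{sm_cond1} is dilation invariant — scaling a Lipschitz graph (or a set with BPLG) produces one with the same constants — so $T_{d_\delta\Gamma}(\delta x)$ is well defined and the statement is meaningful.

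First I would record how each ingredient in the definition \eqref{SGamma} of $S\Gamma$ transforms under dilation. The $d$-dimensional Hausdorff (equivalently, by the equivalence $\alpha\le\mu\le C\alpha$ established above, surface) measure is homogeneous of degree $d$: writing $\mu_\delta$ for the measure carried by $d_\delta\Gamma$, one has $\int_{d_\delta\Gamma}g\,d\mu_\delta=\delta^d\int_\Gamma g(\delta y)\,d\mu(y)$. On the kernel side, the Gaussian scaling $K_t(x)=t^{-d/2}\phi(x/\sqrt t)$ relates $\psi_t=t\,\partial_t K_t$ evaluated at a dilated argument to $\psi$ at a rescaled time. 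Carrying the $t$-derivative through this homogeneity is the one computation to perform with care, and it produces a clean relation of the form $\psi_t(\delta w)=\delta^{-d}\psi_{\delta^2 t}(w)$.

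Substituting these two facts into $S(d_\delta\Gamma)(\delta x,t)=\int_{d_\delta\Gamma}\psi_t(\delta x-z)\,d\mu_\delta(z)$ — replacing $z=\delta y$, cancelling the $\delta^d$ coming from the measure against the $\delta^{-d}$ coming from the kernel, and absorbing the time rescaling — yields the key identity
$$S(d_\delta\Gamma)(\delta x,t)=S\Gamma(x,\delta^2 t).$$
By Remark \ref{inv_r_t} and the measure equivalence no generality is lost in using the Hausdorff normalization, so the $t$-independent measure factor drops out and only the rescaling of the scale variable survives.

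Finally I would translate this identity into a statement about local maxima. Since $t\mapsto\delta^2 t$ is an increasing bijection of $(0,\infty)$, the function $t\mapsto|S(d_\delta\Gamma)(\delta x,t)|$ has a local maximum at $t^\ast$ precisely when $t\mapsto|S\Gamma(x,t)|$ has one at $\delta^2 t^\ast$; that is, $t^\ast\in T_{d_\delta\Gamma}(\delta x)$ if and only if $\delta^2 t^\ast\in T_\Gamma(x)$. Re-indexing by $t=\delta^2 t^\ast$ gives $T_{d_\delta\Gamma}(\delta x)=\{\delta^{-2}t:t\in T_\Gamma(x)\}$, the asserted dilating consistency with $s=-2$, in agreement with the function case. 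The main obstacle is precisely the kernel-scaling step: one must differentiate $K_t$ in $t$ while simultaneously tracking the spatial dilation, and it is there that the exponent $2$ on $\delta$ enters the scale variable; once the identity $S(d_\delta\Gamma)(\delta x,t)=S\Gamma(x,\delta^2 t)$ is in hand, the measure scaling and the transfer of local maxima are purely formal.
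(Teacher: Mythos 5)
Your overall strategy---homogeneity of the Hausdorff measure, homogeneity of the kernel, then transfer of local maxima---is exactly the intended argument (the paper offers no proof, merely asserting the statement is easy), but your one nontrivial computation, the kernel-scaling identity, is wrong, and the error is precisely what makes your conclusion match the stated exponent. From $K_t(x)=t^{-d/2}e^{-\pi|x|^2/t}$ one gets
$$
\psi_t(x)=t\,\partial_t K_t(x)=t^{-d/2}\Bigl(-\tfrac{d}{2}+\pi\tfrac{|x|^2}{t}\Bigr)e^{-\pi|x|^2/t},
$$
and substituting a dilated argument gives
$$
\psi_t(\delta w)=\delta^{-d}\,\psi_{t/\delta^{2}}(w),
$$
not $\delta^{-d}\psi_{\delta^{2}t}(w)$ as you claim: compare the Gaussian factors, $e^{-\pi\delta^{2}|w|^{2}/t}$ on the left versus $e^{-\pi|w|^{2}/(\delta^{2}t)}$ on the right of your version, which agree only when $\delta^{4}=1$. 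Feeding the corrected identity into your (correct) measure scaling $\int_{d_\delta\Gamma}g\,d\mu_\delta=\delta^{d}\int_\Gamma g(\delta y)\,d\mu(y)$ yields
$$
S(d_\delta\Gamma)(\delta x,t)=S\Gamma(x,\delta^{-2}t),
$$
and then your (correct) transfer-of-local-maxima step gives $T_{d_\delta\Gamma}(\delta x)=\{\delta^{2}t:\ t\in T_\Gamma(x)\}$ --- exponent $+2$, not $-2$.

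The corrected exponent is in fact forced by dimensional analysis: in $K_t$ the scale variable $t$ has the dimension of length squared, so enlarging $\Gamma$ by the factor $\delta$ (which is what the paper's definition $d_\delta\Gamma=\{\delta x: x\in\Gamma\}$ does) must multiply local scales by $\delta^{2}$; think of a circle of radius $R$, whose single local scale sits at $t\sim R^{2}$ and moves to $\sim\delta^{2}R^{2}$ after dilation. The printed $\delta^{-2}$ is instead consistent with the function-side convention $d_\delta f(x)=f(\delta x)$, which \emph{shrinks} features, i.e., with the set convention $d_\delta\Gamma=\{x/\delta: x\in\Gamma\}$. So the proposition as stated and the paper's own definition of $d_\delta\Gamma$ disagree by a sign in the exponent, and your derivation, rather than exposing this, silently reproduces the discrepancy through the flipped kernel identity. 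A correct write-up should either prove $T_{d_\delta\Gamma}(\delta x)=\{\delta^{2}t: t\in T_\Gamma(x)\}$ under the paper's definition of $d_\delta\Gamma$, or adopt the shrinking convention and keep $\delta^{-2}$; in either case the kernel step must be repaired as above, while your measure-scaling and local-maxima steps can stand unchanged.
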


By a change of variable, let $\tau = \log_a t$ for some $a>1$. Then the condition in \eqref{sm_cond1} implies that
\begin{equation}\label{sm_cond_log}
\sup_{x\in \Gamma}\left|\frac{\d^k}{\d \tau^k}S\Gamma(x,\tau)\right|\le C_k,
\end{equation}
where this new constant $C_k$ also depends on $a$. Denote
$$
\mathcal{T}_\Gamma(x) = \{\tau = \log_a t: t\in T_\Gamma(x)\}.
$$
Recall that for each $\tau\in \mathcal{T}_\Gamma(x)$, $|S\Gamma(x,\tau)|$ measures how $\Gamma$ deviates from an affine $d$-plane locally at logrithmic scale $\tau$ near $x$. In other words, $|S\Gamma(x,\tau)|$ locally measures the visibility of curviness of $\Gamma$ at $x$. Moreover, for each $x\in \Gamma$, $S\Gamma(x,\tau)$ (as a function of $\tau$) has the $k^{th}$ derivative bounded by the constant $C_k$. This allows us to say something about the distribution of local scales $\mathcal{T}_\Gamma(x)$ at $x$. In particular, we have the following types of local scales.

\begin{definition}
Let $\Gamma\subset \R^n$ such that \eqref{sm_cond_log} holds.
\begin{itemize}
\item For each $\beta>0$. We say a $\tau\in \mathcal{T}_\Gamma(z)$ is $\beta$-visible if $|S\Gamma(z,\tau)|>\beta$. 
\item For each $\delta>0$. We say $\tau\in \mathcal{T}_\Gamma(z)$ is $\delta$-separated if $\left|\frac{\d^2}{\d\tau^2}S\Gamma(z,\tau)\right| >\delta$. 
\end{itemize}
\end{definition}

For each $\delta>0$ and $z\in \Gamma$, denote by
$$
\mathcal{T}_{\delta}(z) = \left\{\tau\in \mathcal{T}_\Gamma(z): \left|\frac{\d^2}{\d\tau^2}S\Gamma(z,\tau)\right| >\delta\right\}.
$$
For each $N>0$, denote by
$$
\Gamma_{\delta,N} = \left\{z\in \Gamma: \#\mathcal{T}_{\delta}(z) > N\right\}.
$$
Then the same result as in Corollary \ref{cor_delta_ls} also holds for $\Gamma$.

\begin{corollary} \label{cor_gamma_delta_n}
Let $\Gamma$ be a bounded $d$-dimensional subsets of $\R^n$ such that \eqref{sm_cond2} and \eqref{sm_cond_log} hold. Then there exist constants $C_1$ and $C_2$ (depending on $\Gamma$) such that 
$$
\mu(\Gamma_{\delta,N}) \le C_1e^{-C_2\delta^3N}.
$$
\end{corollary}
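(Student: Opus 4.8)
The plan is to reproduce, in the geometric setting, the John--Nirenberg argument behind Corollary \ref{cor_delta_ls}: I will show that a point $z$ carrying more than $N$ many $\delta$-separated local scales forces the vertical square function $\int_0^\infty|\frac{\d^2}{\d\tau^2}S\Gamma(z,\tau)|^2\,d\tau$ to be at least of order $N\delta^3$, and then invoke an exponential distributional estimate for that square function. The two hypotheses play complementary roles here: the pointwise derivative bounds \eqref{sm_cond_log} produce the per-scale lower bound, while the $L^2$-boundedness \eqref{sm_cond2} produces the Carleson measure that drives John--Nirenberg.

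First I would establish the per-scale lower bound. Fix $z\in\Gamma$ and let $\tau_1<\tau_2<\cdots$ be the points of $\mathcal{T}_\delta(z)$, so that $\frac{\d}{\d\tau}S\Gamma(z,\tau_j)=0$ and $|\frac{\d^2}{\d\tau^2}S\Gamma(z,\tau_j)|>\delta$. Writing $C_3$ for the constant in \eqref{sm_cond_log} with $k=3$ and setting $\rho=\delta/(2C_3)$, the fundamental theorem of calculus gives $|\frac{\d^2}{\d\tau^2}S\Gamma(z,\tau)|>\delta/2$ on each interval $I_j=[\tau_j-\rho,\tau_j+\rho]$; moreover $\frac{\d^2}{\d\tau^2}S\Gamma$ keeps a constant sign there, so $\frac{\d}{\d\tau}S\Gamma$ is strictly monotone on $I_j$ and $\tau_j$ is its only zero in $I_j$. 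Consequently distinct points of $\mathcal{T}_\delta(z)$ are more than $\rho$ apart, the half-intervals $J_j=[\tau_j-\rho/2,\tau_j+\rho/2]$ are pairwise disjoint, and on each $J_j$ one still has $|\frac{\d^2}{\d\tau^2}S\Gamma|>\delta/2$. Summing,
\[
\int_0^\infty\left|\frac{\d^2}{\d\tau^2}S\Gamma(z,\tau)\right|^2 d\tau\ \ge\ \sum_j\int_{J_j}\left|\frac{\d^2}{\d\tau^2}S\Gamma\right|^2 d\tau\ \ge\ \#\mathcal{T}_\delta(z)\cdot\frac{\delta^2}{4}\cdot\rho\ =\ c\,\#\mathcal{T}_\delta(z)\,\delta^3,
\]
so $z\in\Gamma_{\delta,N}$ implies the square function exceeds $cN\delta^3$.

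Next I would recognize this square function as a finite combination of the functions controlled by \eqref{sm_cond2}. With $\tau=\log_a t$ and $D=t\frac{\d}{\d t}$ one has $S\Gamma(\cdot,t)=Du$ for $u=K_t*\mu$, whence $\frac{\d^2}{\d\tau^2}S\Gamma=(\ln a)^2 D^3 u$ and
\[
\int_0^\infty\left|\frac{\d^2}{\d\tau^2}S\Gamma\right|^2 d\tau=(\ln a)^3\int_0^\infty|D^3 u|^2\,\frac{dt}{t}.
\]
Expanding $D^3=t^3\frac{\d^3}{\d t^3}+3t^2\frac{\d^2}{\d t^2}+t\frac{\d}{\d t}$ and applying Minkowski's inequality in $L^2(dt/t)$ bounds the right-hand side by $C\sum_{i=1}^3 g_i(f)(z)^2$, where $g_i$ is the square function based on $t^i\frac{\d^i}{\d t^i}K_t$ appearing in \eqref{sm_cond2} and $f\equiv 1$ on $\Gamma$ (which lies in $L^2(\Gamma)$ since $\Gamma$ is bounded, with $u=K_t*\mu$). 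By \eqref{sm_cond2} together with the standard localization (split $f$ into its restriction to a fixed ball $2B$ and the far part, controlling the far part by the kernel decay as in Question 2), the measure $|D^3u(z,t)|^2\frac{dt}{t}\,d\mu(z)$ is a Carleson measure on $\Gamma\times\R^+$, while \eqref{sm_cond_log} bounds its integrand pointwise. These are exactly the ingredients of the John--Nirenberg estimate used in \cite{jones2009local}, which yields $\mu(\{z\in\Gamma:\int_0^\infty|\frac{\d^2}{\d\tau^2}S\Gamma(z,\tau)|^2 d\tau>\lambda\})\le C_1 e^{-C_2\lambda}$; taking $\lambda=cN\delta^3$ gives the claim.

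The calculus lower bound is routine. The substantive point, and the main obstacle, is the passage from the $L^2$-bound \eqref{sm_cond2} to genuine \emph{exponential} decay rather than the merely polynomial $1/N$ that Chebyshev alone would give: this is where John--Nirenberg is essential, and it needs both the Carleson property and the uniform pointwise bound on the integrand, together with $(\Gamma,\mu,|\cdot-\cdot|)$ being a space of homogeneous type so that the dyadic/BMO machinery is available. The latter is guaranteed by the regularity underlying \eqref{sm_cond2} (e.g.\ $\Gamma$ a Lipschitz graph or having BPLG). Verifying the Carleson-measure property carefully, including the off-diagonal contribution of the far part, is the one place where the geometry of $\Gamma$ must genuinely be used rather than quoted.
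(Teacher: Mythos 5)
Your proposal follows essentially the same route as the paper's own proof: the identical calculus argument (third-derivative bound from \eqref{sm_cond_log}, intervals of half-width $\sim\delta/C_3$ around each separated scale) giving the lower bound $\int_0^\infty|\frac{\d^2}{\d\tau^2}S\Gamma(z,\tau)|^2\,d\tau\gtrsim N\delta^3$ on $\Gamma_{\delta,N}$, followed by controlling this square function through the $g_k$-bound \eqref{sm_cond2} (the paper writes $\phi_t=(\ln a)^2[t^2\d_t^2\psi_t+t\d_t\psi_t]$ where you expand $(t\d_t)^3$ via Minkowski --- the same thing) and concluding with John--Nirenberg applied to the resulting BMO/Carleson structure, exactly as in \cite{jones2009local}. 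Your version is, if anything, slightly more careful than the paper's at two points the paper glosses over: the disjointness of the intervals (you use half-intervals after establishing separation $>\rho$) and the near/far localization needed to pass from the $L^2$-bound on $g_k(f)$ to the ball-wise Carleson/BMO estimate for $\phi_t*\mu$.
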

\begin{proof}
without lost of generality, we may assume $\mu(\Gamma) = 1$. The proof can be carried out in the exact same manner as in \cite{jones2009local}. For completeness, we show the steps here.
By a change of variable, $\tau = \log_a(t)$, let $S\Gamma(x,\tau) = S\Gamma(x,t)$. Then we have
$$
\frac{\d^2}{\d\tau^2}Sf(x,\tau) = \phi_t*\mu(x),
$$
where $\phi_t = (\ln(a))^2\left[t^2\frac{\d^2}{\d t^2}\psi_t + t\frac{\d}{\d t}\psi_t \right]$. Define the square function
$$
\mathcal{S}^2\Gamma(x) = \int_0^\infty |\phi_t*\mu(x)|^2\frac{dt}{t} = \ln(a)\int_{-\infty}^\infty \left|\frac{\d^2}{\d\tau^2}S\Gamma(x,\tau)\right|^2\ d\tau.
$$
Then by Corollary \ref{cor_gk}, we have
$$
\int_{\Gamma\cap B}\mathcal{S}^2\Gamma(x)\ d\mu(x) \le C_{\Gamma}\|\chi_B\|_{L^2}^2
$$
which shows that $\mathcal{S}^2\Gamma\in BMO(\Gamma)$ with the $BMO$ nom bounded by $C_\Gamma$. Let $C$ be a constant such that
$$
\sup_{\tau\in \R} \left\|\frac{\d^3}{\d\tau^3}S\Gamma(\cdot,\tau) \right\|_{L^\infty(\Gamma)}\le C.
$$
For each $x\in \Gamma_{\delta,N}$ and $\tau_i\in \T_{\delta}(x)$. Let $\epsilon = \delta/(2C)$ and $I_i = (\tau_i-\epsilon,\tau_i+\epsilon)$, then we have
$$
\left|\frac{\d^2}{\d\tau^2}S\Gamma(x,\tau) \right|\ge \frac{\delta}{2},\mbox{ for all } \tau\in I_i,
$$
and $I_i\cap \T_\Gamma(x) = \{\tau_i\}$ and the $\{I_i\}$ are disjoint. We have
\begin{equation*}
\begin{split}
\mathcal{S}^2\Gamma(x) &=  \ln(a)\int_{-\infty}^\infty \left|\frac{\d^2}{\d\tau^2}S\Gamma(x,\tau)\right|^2\ d\tau \ge \sum_{\tau_i\in \T_\delta(x)}\int_{I_i} \left|\frac{\d^2}{\d\tau^2}S\Gamma(x,\tau)\right|^2\ d\tau\\
&\ge (\delta/2)^2|I_i|(\#\T_\delta(x)) >  CN\delta^3,
\end{split}
\end{equation*}
for some new constant $C$.  This implies
\begin{equation*}
\begin{split}
\Gamma_{\delta,N}&\subset \{x\in \Gamma: \mathcal{S}^2\Gamma(x)>CN\delta^3\}\\
&\subset \left\{x\in \Gamma: \left|\mathcal{S}^2\Gamma(x)- \mathcal{S}_\Gamma\right|>CN\delta^3 - \mathcal{S}_\Gamma\right\},
\end{split}
\end{equation*}
where $\mathcal{S}_\Gamma = \frac{1}{\mu(\Gamma)}\int_{\Gamma}\mathcal{S}^2\Gamma(x)\ d\mu(x)$.

If $CN\delta^3>\mathcal{S}_\Gamma$, then by the John-Nirenberg Theorem \cite{john1961functions}, there exist positive constants $C_1'$ and $C_2'$ independent of $\Gamma$ such that
\begin{equation}\label{jn_eq1}
\mu\left(\left\{x\in \Gamma: \left|\mathcal{S}^2\Gamma(x)- \mathcal{S}_\Gamma\right|>CN\delta^3 - \mathcal{S}_\Gamma\right\}\right)\le |\Gamma|C_1'e^{-C_2'(CN\delta^3 -\mathcal{S}_\Gamma)/\|\mathcal{S}^2\Gamma\|_{BMO}}.
\end{equation}
On the other hand, if $CN\delta^3\le \mathcal{S}_\Gamma$, then \eqref{jn_eq1} still holds with $C_1'\ge 1$. Let 
$$
C_1 = |\Gamma|C_1'e^{\frac{C_2'\mathcal{S}_\Gamma}{\|\mathcal{S}^2\Gamma\|_{BMO}}},\mbox{ and } C_2 = \frac{C_2'C}{\|\mathcal{S}^2\Gamma\|_{BMO}},
$$
then 
$$
\mu(\Gamma_{\delta,N}) \le C_1e^{-C_2\delta^3N}.
$$
\end{proof}

Analogous to Corollary \ref{cor_beta_delta_ls}, for $x\in \Gamma$ and $\beta,\delta >0$, define
$$
\tau_{\beta,\delta}(x) = \left\{\tau\in \T_\Gamma(x): |S\Gamma(x,\tau)|>\beta, \left|\frac{\d^2}{\d \tau^2}S\Gamma(x,\tau)\right| >\delta\right\},
$$
and 
\begin{equation}\label{gamma_beta_delta_N}
\Gamma_{\beta,\delta,N} = \{x\in \Gamma: \#\tau_{\beta,\delta}(x) \ge N\}.
\end{equation}
A similar result as in Corollary \ref{cor_delta_ls} also holds for $|\Omega_{\beta,\delta,N}|$.
\begin{corollary}
Assume $\Gamma$ is as in Corollary \ref{cor_gamma_delta_n}. Then there exist constants $C_1$ and $C_2$ which depend on $\Gamma$ such that
$$
\mu(\Gamma_{\beta,\delta,N}) \le C_1e^{-C_2N\delta^2\alpha},
$$
where $\alpha = \min(\beta,\delta)$.
\end{corollary}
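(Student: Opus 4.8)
The plan is to follow the proof of Corollary \ref{cor_gamma_delta_n} essentially verbatim, the single change being that each maximal scale now carries two pieces of information—visibility and separation—and the interval on which both are simultaneously controlled has length comparable to $\alpha=\min(\beta,\delta)$ rather than to $\delta$. As before I would normalize $\mu(\Gamma)=1$, pass to the logarithmic variable $\tau=\log_a t$, and reuse the square function
$$
\mathcal{S}^2\Gamma(x) = \int_0^\infty |\phi_t*\mu(x)|^2\frac{dt}{t} = \ln(a)\int_{-\infty}^\infty \left|\frac{\d^2}{\d\tau^2}S\Gamma(x,\tau)\right|^2\ d\tau,
$$
which, exactly as in Corollary \ref{cor_gamma_delta_n}, belongs to $BMO(\Gamma)$ with norm controlled by $C_\Gamma$ because the $g_k$ bound \eqref{sm_cond2} (Corollary \ref{cor_gk}) makes the associated measure a Carleson measure.

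The only new ingredient is the lower bound for $\mathcal{S}^2\Gamma$ on $\Gamma_{\beta,\delta,N}$. Fix $x\in\Gamma_{\beta,\delta,N}$ and $\tau_i\in\tau_{\beta,\delta}(x)$. I would invoke \eqref{sm_cond_log} twice. With $k=1$ it gives $\|\frac{\d}{\d\tau}S\Gamma(\cdot,\tau)\|_{L^\infty}\le C_1$, so that $|S\Gamma(x,\tau)|\ge \beta/2$ on an interval $J_i$ of length $\sim \beta/C_1$ centered at $\tau_i$; with $k=3$ it gives $\|\frac{\d^3}{\d\tau^3}S\Gamma(\cdot,\tau)\|_{L^\infty}\le C_3$, so that $|\frac{\d^2}{\d\tau^2}S\Gamma(x,\tau)|\ge \delta/2$ on an interval $I_i$ of length $\sim\delta/C_3$ centered at $\tau_i$. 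On the intersection $I_i\cap J_i$, which is centered at $\tau_i$ and has length comparable to $\alpha=\min(\beta,\delta)$, both lower bounds hold at once. Since the $\{I_i\}$ are pairwise disjoint (as in Corollary \ref{cor_gamma_delta_n}), so are the $\{I_i\cap J_i\}$, and summing the contributions gives
$$
\mathcal{S}^2\Gamma(x)\ \ge\ \ln(a)\sum_{\tau_i\in\tau_{\beta,\delta}(x)}\int_{I_i\cap J_i}\left|\frac{\d^2}{\d\tau^2}S\Gamma(x,\tau)\right|^2\ d\tau\ \ge\ cN\delta^2\alpha
$$
for a constant $c$ depending only on $a,C_1,C_3$.

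With this bound the conclusion parallels Corollary \ref{cor_gamma_delta_n} and the image case Corollary \ref{cor_beta_delta_ls}: one has
$$
\Gamma_{\beta,\delta,N}\subset\left\{x\in\Gamma:\mathcal{S}^2\Gamma(x)>cN\delta^2\alpha\right\}\subset\left\{x\in\Gamma:\left|\mathcal{S}^2\Gamma(x)-\mathcal{S}_\Gamma\right|>cN\delta^2\alpha-\mathcal{S}_\Gamma\right\},
$$
where $\mathcal{S}_\Gamma=\frac{1}{\mu(\Gamma)}\int_\Gamma\mathcal{S}^2\Gamma(x)\ d\mu(x)$, and then the John-Nirenberg Theorem \cite{john1961functions} applied to $\mathcal{S}^2\Gamma\in BMO(\Gamma)$ yields $\mu(\Gamma_{\beta,\delta,N})\le C_1 e^{-C_2 N\delta^2\alpha}$ after absorbing $\mathcal{S}_\Gamma$ and $\|\mathcal{S}^2\Gamma\|_{BMO}$ into $C_1,C_2$ exactly as in the previous proof.

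The main obstacle—indeed the only genuinely new point—is the geometric bookkeeping in the second paragraph: one must check that the two derivative bounds combine to give a \emph{simultaneous} lower interval of length $\sim\alpha$ (taking the smaller, not the larger, of the two widths) and confirm disjointness so that the single scalar square function $\mathcal{S}^2\Gamma$ records all $N$ scales additively. Everything else (the normalization, the Carleson/$BMO$ property of $\mathcal{S}^2\Gamma$, and the John-Nirenberg step) is identical to Corollary \ref{cor_gamma_delta_n}. I would also note that, since $\Gamma_{\beta,\delta,N}\subset\Gamma_{\delta,N}$, the present bound is in fact weaker than what Corollary \ref{cor_gamma_delta_n} already delivers when $\beta\ge\delta$; the statement is recorded here for symmetry with the image-theoretic Corollary \ref{cor_beta_delta_ls}.
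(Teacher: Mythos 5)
Your proof is correct and takes essentially the approach the paper intends: the paper states this corollary without a written proof as the direct analogue of Corollary \ref{cor_gamma_delta_n}, and your adaptation---using \eqref{sm_cond_log} with $k=1$ and $k=3$ to get a simultaneous lower-bound interval of length $\sim\alpha=\min(\beta,\delta)$ about each $\tau_i$, integrating $|\frac{\d^2}{\d\tau^2}S\Gamma|^2\ge(\delta/2)^2$ over these disjoint intervals to obtain $\mathcal{S}^2\Gamma(x)\ge cN\delta^2\alpha$, and then running the same $BMO$/John--Nirenberg step---is exactly the intended argument. One small correction to your closing aside: since $\alpha\le\delta$ always, the bound $e^{-C_2N\delta^2\alpha}$ coincides with $e^{-C_2N\delta^3}$ when $\beta\ge\delta$ and is strictly weaker when $\beta<\delta$, not the reverse as you wrote.
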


Analogous to \eqref{s*f}, we can also define the nontangential control of $S\Gamma(x,t)$ by
$$
S^*\Gamma(x,t) = \sup_{|x-y| < \frac{\sqrt{t}}{\pi}} |S\Gamma(y,t)|e^{-\pi|x-y|^2/t}. 
$$
The nontangential local scales can be defined as before using $S^*\Gamma(x,t)$.
\begin{definition}
The nontangential local scales of $\Gamma$ at $x$ is defined as the set
$$
T^*_\Gamma(x) = \{t\in (0,\infty): S^*\Gamma(x,y) \mbox{ is  a local maxima}\}.
$$
\end{definition}
It can also be shown in this case that $T^*_\Gamma(x)$ satisfy the dilating consistency property with $s=-2$, i.e. for all $\delta>0$,
\begin{equation}
T^*_{d_\delta\Gamma}(\delta x) =\left \{\delta^{-2}t: t\in T^*_\Gamma(x)\right\}.
\end{equation}

\section{Discussion}
In this section, we would to give more insights into our approach and discuss possible extensions.
\begin{enumerate}
\item The kernel (wavelet) $\psi_t$ dictates the type of local scales we see. In our case, $\psi_t = t\frac{\d K_t}{\d t}$ is symmetric and has zero mean and zero first moments. This implies that the local scales $t_i$'s at $x$ are points in time where $\Gamma$ have large deviations from a $d$-plane in the ball $B_{\sqrt{t}/\pi}(x)$. This is related to $\beta_p$ defined as \cite{david1993analysis}
\begin{equation}
\beta_p(x,t) = \inf_{P\in \mathcal{P}} \left\{\frac{1}{\mu(B(x,t))}\int_{\Gamma\cap B(x,t)} \left[t^{-1}dist(y,P) \right]^p  \right\}^{1/p},
\end{equation}
where the infimum is taken over the set $\mathcal{P}$ consisting of all $d$-plane $P$. Instead of considering $\mathcal{P}$, we can consider a different set of $d$-dimensional surfaces.

\item Let $\Gamma$ be a connected and bounded $d$-dimensional subset in $\R^n$, and suppose that it can be parametrized by $\Gamma = \{(f_1(r), \cdots, f_n(r)): r\in [0,1]^d\}$, where $f_i$ is continuous for each $i$. For each $i$, define
$$
u_i(r,t) = K_t*f_i(r) = \int_{\R^d} K_t(r-s)f_i(s)\ ds,
$$
where $K_t(r) = t^{-d/2}e^{-\pi |r|^2/t}$. For each $t>0$, denote by $\Gamma_t = (u_1(r,t), \cdots, u_n(r,t))$. We can think of $\Gamma_t$ as the diffused version of $\Gamma$ at scale $t$. Note that $u_i(t)$ depends on the parametrization of $f_i$. However, give a parametrization, this method provides a tool for obtaining a diffused $d$-plane $\Gamma_t$. Define $S\Gamma(r,t)$ by
$$
S\Gamma(r,t) = \left\|\left(t\frac{\d K_t}{\d t}*f_1(r), \cdots, t\frac{\d K_t}{\d t}*f_n(r)\right) \right\|.
$$
For each $r\in \R^d$, as before, the local scales of $\Gamma$ can be defined as the local maxima of $S\Gamma(r,t)$. This approach is considered by L-M  Reissell in \cite{reissell1996wavelet} and by P.L. Rosin \cite{rosin1998determining} to represent curves in a multiscale fashion using wavelets. Note that $u_i(r,t)$ can be viewed as a heat diffusion with the initial condition given by $f_i(r)$. One can also use nonlinear diffusions for $u_i(r,t)$.

\item In studying the local scales of $\Gamma$, we assume that the dimension $d$ of $\Gamma$ is known. One question would be: Is it possible to detect the dimension of $\Gamma$?. Better yet, is it possible to detect the local dimension in $\Gamma$ at different scales? Recently, this question is addressed by M. Maggioni and collaborators \cite{maggioni2010}.
\end{enumerate}

{\bf Acknowledgement:} The author is very grateful for the supports from NSF DMS 0809270 and ONR N000140910108.
\bibliography{biblioDef} \bibliographystyle{alpha}

\end{document}